\newtheorem{theorem}{Theorem}[section]
\newtheorem{lemma}[theorem]{Lemma}
\newtheorem{corollary}[theorem]{Corollary}
\theoremstyle{definition}
\theoremstyle{remark}
\numberwithin{equation}{section}
\newcommand{\B}{\mathbf{B}_q}
\begin{document}
	
	\title[Existence theorem]{Existence theorem of a weak solution for Navier-Stokes type equations associated with de~Rham complex
	}

	\author[A. Polkovnikov]{Alexander Polkovnikov}
	
	\address{Siberian Federal University,
		Institute of Mathematics and Computer Science,
		pr. Svobodnyi 79,
		660041 Krasnoyarsk,
		Russia}
	
	\email{paskaattt@yandex.ru}

	\subjclass [2010] {58J10, 35K45}
	
	\keywords{elliptic differential complexes, parabolic nonlinear equations, 
		open mapping theorem}
	
	\begin{abstract}
		Let $ \{d_q, \Lambda^{q} \} $ be de Rham complex on a smooth compact closed manifold $X$ over $ \mathbb{R}^3 $ with Laplacians $\Delta_{q} $. We consider operator equations, associated with the	parabolic differential operators $\partial_t + \Delta_2 + N^{2} $ on the second step of complex with nonlinear bi-differential operator  of zero order $ N^{2} $. Using by projection on the next step of complex we show that the equation has unique solution in special 
		Bochner-Sobolev type functional spaces for some (small enough) time $ T^* $.
	\end{abstract}

	\maketitle
	
	\section*{Introduction}
	Consider the de Rham complex on a Riemannian $ n $-dimensional smooth compact closed manifold $X$ with vector bundles $ \Lambda^{q} $ of exterior forms of degree $ q $ over $X$,
	\begin{equation}\label{key}
		0\longrightarrow \varOmega^0(X)\xrightarrow{d_0} \varOmega^1(X)
		\xrightarrow{d_1} \cdots \xrightarrow{d_{n-1}} \varOmega^n(X)\longrightarrow 0.
	\end{equation}
Here $ \varOmega_q(X) $ denotes the space of all differential forms of degree $ q $ with smooth coefficients on $ X $.
	In this case the Laplacians	$  \Delta_{q} = d_q^* d_{q} + d_{q-1} d_{q-1}^*$, 	$ q=0, 1, \dots, n $, of the complex are second order strongly elliptic differential operators on $X$, where operator $ d_q^* $ is a formal adjoint to $ d_q $. As usual, for $ q<0 $ and $ q\geq n $ we assume that $ 	d_q = 0 $.
	
	We want to study the non-linear problems, associated with the complex.
	With this purpose, we denote by $ M_{i,j} $ two bilinear bi-differential operators of zero order (see \cite{Bourbaki} or \cite{Tarkhanov95}),
	\begin{equation}\label{eq1}
		\begin{array}{c}
			M_{q,1}(\cdot,\cdot): \big( \varOmega^{q+1}(X),\ \varOmega^{q}(X)\big) \to \varOmega^{q}(X),\\ 
			M_{q,2}(\cdot,\cdot): \big( \varOmega^{q}(X),\ \varOmega^{q}(X)\big) \to \varOmega^{q-1}(X).
		\end{array}
	\end{equation}
	We set for a differential form $ u $ of the degree  $ {q} $
	\begin{equation}\label{eq2}
		N^{q}(u) =: M_{q,1} ( d_{q}u, u) +  d_{q-1} M_{q,2}(u, u).
	\end{equation}
%Of course, an 
Note, that operator $ N^{q}(u) $ is non-linear.
	
	Let now time $ T>0 $ is finite. Then  for any fixed positive number $ \mu $ the operators $ \partial_t + \mu \Delta_q $ are parabolic on the cylinder $  X \times (0,T) $ (see \cite{Eidelman}). Consider the following initial problem: given sufficiently regular differential forms $ f $ of the induced bundle $ \Lambda^{q} (t) $ (the variable $ t $ enters into this bundle as a 
	parameter) and $ u_0 $ of the bundle $ \Lambda^{q} $, find a differential forms $ u $ of the induced bundle $ \Lambda^{q} (t) $ and $ p $ of the induced bundle $ \Lambda^{q-1} (t) $ such that
	\begin{equation}\label{eq3}
		\begin{cases}
			\partial_t u + \mu \Delta_{q} u + N^{q}(u) + d_{q-1} p = f& \text{in } X \times (0,T),\\
			d_{q-1}^* u =0 & \text{in } X \times [0,T],\\
			d_{q-2}^* p =0 & \text{in } X \times [0,T],\\
			u(x,0) = u_0& \text{in } X,
		\end{cases}
	\end{equation}
	
	For general elliptic complexes this problem was considered in the works \cite{ShlapunovPar} and \cite{N28B}, where the open mapping theorems were proved in the special spaces of H\"older (see \cite{ShlapunovPar}) and Sobolev (see \cite{N28B}) types. It means that the range of the non-linear operator ${\mathcal A}_q$ , related to the problem, is open in the constructed spaces. However, obtaining an existence theorem for the solution (even the 
	so-called weak one) and closedness of the range of the related non-linear 
	operator in such spaces appears to be a more difficult task.
	%See papers \cite{ShT20} 
	
	For example, if we take $q=1$ and a suitable nonlinear term we may treat \eqref{eq3} as the initial problem for the well known Navier-Stokes equations for incompressible fluid over the manifold $X$ (see, for instance, \cite{Tarkhanov19} or \cite{ShTar21}). Note that the equation with respect to $p$ is actually missing in this case, because $d_{-1}^* = 0$.
	
	We consider problem \eqref{eq3} in the case $n=3$, $q=2$ and a special nonlinearity $  M_{q,1} ( d_{q}u, u) = (d_{q}u) u $. It easy to see that in this case we can treat the de Rham differentials as $ d_{2} = \mathrm{div} $, $ d_{1} = \mathrm{rot} $, $ d_{2}^* = - \nabla $, $ d_{1}^* = \mathrm{rot} $ and then \eqref{eq3} transforms to
	\begin{equation}\label{eq34}
		\begin{cases}
			\partial_t u + \mu \Delta_{2} u + N^{2}(u) + \mathrm{rot}\, p = f& \text{in } X \times (0,T),\\
			\mathrm{rot}\, u =0 & \text{in } X \times [0,T],\\
			\mathrm{div}\, p =0 & \text{in } X \times [0,T],\\
			u(x,0) = u_0& \text{in } X,
		\end{cases}
	\end{equation}
	where 
	\begin{equation}\label{eq35}
		N^{2}(u) = (\mathrm{div}\,u) u +  \mathrm{rot}\,( M_{q,2}(u, u)),
	\end{equation}	
	and Laplacian
	\[
	\Delta_{2} u = d_2^* d_{2} + d_{1} d_{1}^* = -\nabla  \mathrm{div}\, u + \mathrm{rot}\,\mathrm{rot}\,u = - \Delta u.
	\]
	Here $ \Delta u$ is a standard Laplace operator applied componentwise to the differential form $ u $ in the space variable $ x $.

Using projection to the next step of complex \eqref{key}, we prove an existence theorem of weak (distributional) solution in the constructed Bochner-Sobolev type spaces for some 
	(small enough) time 	$ T^* $. 
	Note, that considering general non-linear perturbations 
	of linear parbolic equations one have to impose essential restrictions 
	on the non-linear term $ N^2(u) $  in order to achieve existence of  
	weak solutions. For example, one of such condition can be positiveness of non-linear operator $ N^2(u) $. However, we do not impose such strong conditions on the non-linear term, but still have an existence of weak solutions due to special properties 
	of the de Rham complex.

	\section{Functional spaces}

	Denote by $ L^p_{\Lambda^{q}} $, $ 1\leq p \leq\infty $, space of differential forms of the degree  $ {q} $ with coefficients in the Lebesgue space $ L^p (X) $. 
	In a similar way we designate the spaces of forms on $ X $ whose components are of	Sobolev class or have continuous partial derivatives. We denote it by $ W^{s,p}_{\Lambda^{q}} $ and $ C^{s}_{\Lambda^{q}} $ respectively with smoothness $ s $. In particular case, for $ p=2 $ we designate $ H^{s}_{\Lambda^{q}}:= W^{s,2}_{\Lambda^{q}}$.
	
	For calculations, it is convenient to use the fractional powers of the Laplace operator. Namely, for differential form $ u $ of degree $ q $ we denote by 
	\begin{equation}\label{frac.laplase}
		\nabla^m_q u : = 
		\begin{cases}
			\Delta_{q}^{m/2}u,\quad &m\ \text{is even},\\
			(d_q \oplus d_{q-1}^*)\Delta_{q}^{(m-1)/2}u,\quad &m\ \text{is odd}.\\
		\end{cases}
	\end{equation}
It easy to see that integration by parts yields% we can replace the norm 
\[
\sum_{|\alpha| = m} \|\partial^{\alpha} u \|^2_{L^2_{\Lambda^{q}}} = \|\nabla^m_q u\|^2_{L^2_{\Lambda^{q}}}.
 \]

	Now, we want to recall the standard Hodge theorem for elliptic complexes. For this purpose denote by $ \mathcal{H}^q $ the harmonic space of
	complex (\ref{key}), i.e.
	\begin{equation}\label{eq20}
		\mathcal{H}^q = \left\lbrace u \in C^{\infty}_{\Lambda^{q}}: d_q	u = 0 \mbox{ and } d_{q-1}^*	u = 0\mbox{ in }X \right\rbrace,
	\end{equation}
	and by $ \Pi^i $ the orthogonal projection from $ L^2_{\Lambda^{q}} $ onto $ \mathcal{H}^q $. 
	
	\begin{theorem}\label{parametrix.laplas}
		Let $ 0 \leq q \leq n $, $ s\in \mathbb{Z}_+  $. Then operator 
		\begin{equation}\label{lapl}
			\Delta_{q}: H^{s+2}_{\Lambda^{q}}\to H^s_{\Lambda^{q}}
		\end{equation}
		is Fredholm:
		
		(1) the kernel of operator (\ref{lapl}) equals to the finite-dimensional space $ \mathcal{H}^q $;
		
		(2) given $ v\in H^s_{\Lambda^{q}} $ there is a form $ u\in H^{s+2}_{\Lambda^{q}} $  such that $ \Delta_{q} u = v  $ if and
		only if $ (v, h)_{L^2_{\Lambda^{q}}} = 0 $ for all $ h \in \mathcal{H}^q $;
		
		(3) there exists a pseudo-differential operator $ \varphi^i $ on $ X $ such that the operator
		\begin{equation}%\label{key}
			\varphi^q: H^s_{\Lambda^{q}}\to H^{s+2}_{\Lambda^{q}},
		\end{equation}
		induced by $ \varphi^q $, is linear bounded and with the identity $ I $ we have
		\begin{equation}\label{eq22}
			\varphi^q \Delta_{q} = I - \Pi^q\mbox{ on } H^{s+2}_{\Lambda^{q}},\quad  \Delta_{q} \varphi^q = I - \Pi^q\mbox{ on } H^{s}_{\Lambda^{q}}
		\end{equation}
	\end{theorem}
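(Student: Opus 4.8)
The plan is to deduce everything from the ellipticity of the de~Rham complex together with the standard pseudodifferential parametrix construction on the compact closed manifold $X$. First I would record that the principal symbol of $d_q$ at $(x,\xi)$ is, up to a constant, exterior multiplication by $\xi$, so the symbol sequence of \eqref{key} is the Koszul complex $0\to\Lambda^0_x\xrightarrow{\xi\wedge}\Lambda^1_x\to\cdots$, which is exact for $\xi\neq 0$; equivalently, by the Cartan identity for wedge product and contraction, the principal symbol of $\Delta_q$ is $|\xi|^2\,\mathrm{Id}_{\Lambda^q_x}$, invertible off the zero section. Hence $\Delta_q$ is a second order, formally self-adjoint, strongly elliptic differential operator, and the bounded map \eqref{lapl} admits a two-sided classical $\Psi$DO parametrix $P$ of order $-2$ with $P\Delta_q=I-R_1$ and $\Delta_q P=I-R_2$, where $R_1,R_2$ are smoothing. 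This already yields that \eqref{lapl} is Fredholm, that any solution of $\Delta_q u=0$ in $H^{s+2}_{\Lambda^q}$ satisfies $u=P\Delta_q u+R_1 u=R_1 u\in C^\infty_{\Lambda^q}$ (elliptic regularity), and that the range of \eqref{lapl} is closed of finite codimension.

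For part~(1): if $u\in H^{s+2}_{\Lambda^q}$ and $\Delta_q u=0$, the regularity just noted gives $u\in C^\infty_{\Lambda^q}$, and then, since $X$ is closed and carries no boundary, integration by parts yields
\[
0=(\Delta_q u,u)_{L^2_{\Lambda^q}}=\|d_q u\|^2_{L^2_{\Lambda^{q+1}}}+\|d_{q-1}^* u\|^2_{L^2_{\Lambda^{q-1}}},
\]
so $d_q u=0$ and $d_{q-1}^* u=0$, i.e. $u\in\mathcal{H}^q$; the reverse inclusion $\mathcal{H}^q\subset\ker\Delta_q$ is immediate, and $\dim\mathcal{H}^q<\infty$ by Fredholmness. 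For part~(2): if $v=\Delta_q u$ with $u\in H^{s+2}_{\Lambda^q}$, then for each $h\in\mathcal{H}^q$ one has $(v,h)_{L^2_{\Lambda^q}}=(u,\Delta_q h)_{L^2_{\Lambda^q}}=0$ by formal self-adjointness of $\Delta_q$; thus the range of \eqref{lapl} is contained in the annihilator $\{v\in H^s_{\Lambda^q}:\;(v,h)_{L^2_{\Lambda^q}}=0\;\text{for all }h\in\mathcal{H}^q\}$. Since $\Delta_q$ is formally self-adjoint its index is $0$, so the codimension of the range in $H^s_{\Lambda^q}$ equals $\dim\mathcal{H}^q$, which is exactly the codimension of that annihilator (the harmonic forms being linearly independent continuous functionals on $H^s_{\Lambda^q}$ via the $L^2$ pairing); the two closed subspaces therefore coincide, giving the solvability criterion.

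For part~(3): I would define the Green operator $\varphi^q$ by splitting $v=\Pi^q v+(I-\Pi^q)v$; by part~(2) the form $(I-\Pi^q)v$ lies in the range of \eqref{lapl}, and by part~(1) it has a unique preimage $u\in H^{s+2}_{\Lambda^q}$ that is orthogonal in $L^2_{\Lambda^q}$ to $\mathcal{H}^q$, and one sets $\varphi^q v:=u$. Boundedness of $\varphi^q:H^s_{\Lambda^q}\to H^{s+2}_{\Lambda^q}$ follows from the open mapping theorem applied to $\Delta_q$ on the orthogonal complement of $\mathcal{H}^q$, or directly from the a~priori elliptic estimate $\|u\|_{H^{s+2}_{\Lambda^q}}\le C\big(\|\Delta_q u\|_{H^s_{\Lambda^q}}+\|u\|_{L^2_{\Lambda^q}}\big)$ combined with the bound $\|u\|_{L^2_{\Lambda^q}}\le C\|\Delta_q u\|_{L^2_{\Lambda^q}}$ valid on that complement. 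The identities \eqref{eq22} then hold by construction: $\varphi^q\Delta_q u=u-\Pi^q u$, since $u-\Pi^q u$ is the unique preimage of $\Delta_q u$ orthogonal to $\mathcal{H}^q$, and $\Delta_q\varphi^q v=(I-\Pi^q)v$. Finally, since $\mathcal{H}^q$ is spanned by finitely many smooth forms, $\Pi^q$ has a smooth Schwartz kernel and is smoothing; from $\Delta_q\varphi^q=I-\Pi^q$ and $P\Delta_q=I-R_1$ one gets $\varphi^q-P=R_1\varphi^q-P\Pi^q$, which is smoothing, so $\varphi^q$ is a classical $\Psi$DO of order $-2$ on $X$.

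I expect the only genuinely non-elementary ingredient to be the pseudodifferential parametrix, elliptic regularity, and the Fredholm package it provides; the remainder is the standard self-adjoint Hodge bookkeeping. The single point demanding a little care is upgrading the parametrix relations ``$P\Delta_q=I-\text{smoothing}$'' to the exact identities \eqref{eq22} with $\Pi^q$ on the right, which is achieved by absorbing the finite-rank, smooth-kernel correction coming from $\Pi^q$, using that both $\ker$ and $\mathrm{coker}$ of \eqref{lapl} coincide with the same finite-dimensional space $\mathcal{H}^q$ of smooth harmonic forms.
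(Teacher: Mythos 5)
Your proposal is correct: it is the standard parametrix--Fredholm--Hodge argument (Koszul exactness of the symbol sequence, two-sided parametrix, elliptic regularity, index zero from formal self-adjointness, and the Green operator $\varphi^q$ built on the orthogonal complement of $\mathcal{H}^q$). The paper itself gives no proof of Theorem~\ref{parametrix.laplas}, only the citation to \cite[Theorem 2.2.2]{Tarkhanov95}, and your argument is essentially the one carried out in that reference, so there is nothing substantive to contrast; the only point deserving a word of care is that to conclude $R_1\varphi^q$ is smoothing one should note that $\varphi^q$ extends boundedly to Sobolev spaces of all (including negative) orders, which follows from its $L^2$ self-adjointness and the scale of a~priori estimates you already invoke.
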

	\begin{proof}%\label{key}
		See, for instance, %\cite{Nicolaescu} or 
		\cite[Theorem 2.2.2]{Tarkhanov95}.
	\end{proof}

	Denote by $V^s_{\Lambda^{q}} := H^{s}_{\Lambda^{q}} \cap S_{d_{q-1}^*} $ the space of all differential forms $ u\in H^{s}_{\Lambda^{q}} $ satisfying $ d_{q-1}^* u = 0 $ in the sense of distributions in $ X $. Let now $L^2(I,H^{s}_{\Lambda^{q}})$ be the Bochner space of $L^2$-mappings
	\[
	u(t): I \to H^{s}_{\Lambda^{q}},
	\]
	where $ I = [0,T] $, see, for instance, \cite{Lion69}.
	It is a Banach space with the norm 
	\[
	\|u\|_{L^2(I,H^{s}_{\Lambda^{q}})}^2 = \int_0^T \|u\|^2_{H^{s}_{\Lambda^{q}}} dt.
	\]
	We need to introduce suitable Bocner-Sobolev type spaces, see \cite{ShT20} or \cite{ShTar21} for the de Rham complex and \cite{N28B} for the general elliptic complexes. 
	Namely, for $s\in\mathbb{Z}_+ $ denote by $ B^{k,2s,s}_{q,\text{vel}} (X_T) $ the 
	space of all differential forms of degree $ q $ over $ X_T:= X\times [0,T] $ 
	with variable $ t\in [0,T] $ as a parameter,
	such that
	\[
	u\in C (I,V_{\Lambda^{q}}^{k+2s})\cap L^2(I,V_{\Lambda^{q}}^{k+2s+1})
	\]
	and 
	\[
	% \sum_{l=1}^N
	\nabla^m_q \partial^j_t u \in C (I,V_{\Lambda^{q}}^{k+2s-m-2j})\cap L^2(I,V_{\Lambda^{q}}^{k+2s+1-m-2j})
	\]
	for all $ m + 2j \leq 2s$. It is a Banach space with the norm 
	$$
	\| u \|^2_{B^{k,2s,s}_{q,\text{vel}}}
	:=
	\sum_{m+2j \leq 2s \atop 0\leq l \leq k}
	\| \nabla^l_q \nabla^m_q \partial_t^j u \|^2
	_{C (I,{L}_{\Lambda^{q}}^2)}
	+ \| \nabla^{l+1}_q \nabla^m_q \partial_t^j u \|^2
	_{L^2 (I,{L}_{\Lambda^{q}}^2 )}.
	$$
	
	Similarly, for $s, k \in {\mathbb Z}_+$, we define the space 
	$B^{k,2s,s}_{q,\mathrm{for} } (X_T)$ to consist of all differential forms
	\[
	f\in C (I,{H}_{\Lambda^{q}}^{2s+k}) \cap L^2 (I,{H}_{\Lambda^{q}}^{2s+k+1})
	\]
	with the property that 
	$$
	\nabla^m_q \partial _t^j f
	\in
	C (I,{H}_{\Lambda^{q}}^{k+2s-m-2j}) \cap L^2 (I,{H}_{\Lambda^{q}}^{k+2s-m-2j+1})
	$$
	for all
	$
	m+2j \leq 2s.
	$
	We endow the space $B^{k,2s,s}_{q,\mathrm{for}} (X_T)$ 
	with the natural norm 
	$$
	\| f \|^2_{B^{k,2s,s}_{q,\mathrm{for}}}
	:=
	%\Big( %\sum_{i=0}^k 
	\sum_{m+2j \leq 2s \atop 0\leq l \leq k}
	\| \nabla^l_q \nabla^m_q \partial_t^j f \|^2
	_{C (I,{L}_{\Lambda^{q}}^2)}
	+ \| \nabla^{l+1}_q \nabla^m_q \partial_t^j f \|^2
	_{L^2 (I,{L}_{\Lambda^{q}}^2)}. 
	$$
	
	Lastly, the space for the differential form $p$ we denote by 
	$B^{k+1,2s,s}_{q-1,\mathrm{pre}}(X_T)$.  
	This space consists of all forms $p$ from the space 
	$C (I,H_{\Lambda^{q-1}}^{2s+k+1}) \cap 
	L^2 (I,H_{\Lambda^{q-1}}^{2s+k+2} )$ such that $d_{q-1} p \in B^{k,2s,s}_{q,\mathrm{for}}(X_T)$, $ d_{q-2}^* p = 0 $ and for all $h \in \mathcal{H}^{q-1} $
	\begin{equation}\label{eq.p.ort.0}
		( p,h)_{L^2_{\Lambda^{q-1}}} = 0.
	\end{equation}
It is a Banach space with the norm 
	$$
	\| p \|_{B^{k+1,2s,s}_{q-1,\mathrm{pre}}}
	= \| d_{q-1} p \|_{B^{k,2s,s}_{q,\mathrm{for}}}.
	$$
	
	Define now for suitable forms $ v $ and $ w $ of degree $ q $ a bi-differential operator
	\begin{equation}%\label{key}
		\B (w,v) = M_{q,1} ( d_{q}w, v) + M_{q,1} ( d_q v, w) +  d_{q-1}\big(  M_{q,2}(w, v)  +  M_{q,2}(v, w)\big),
	\end{equation}
	with the operators $ M_{q,1} $ and $ M_{q,2} $ satisfying 
	\begin{equation}\label{eq9}
		|M_{q,1}(u,v)|\leq c_{q,1} |u|\,|v|,\quad |M_{q,2}(u,v)|\leq c_{q,2} |u|\,|v| 
		\mbox{ on } X 
	\end{equation}
	with some  positive constants $ c_{i,j} $.
	Following theorem allows us to see the correctness of operators in this spaces.
	\begin{theorem}
		\label{l.NS.cont.0}
		Suppose that
		$s \in \mathbb N$,
		$k \in {\mathbb Z}_+$ and $2s+k>\frac{n}{2}-1$.
		Then the mappings
		$$
		\begin{array}{rrcl}
			\nabla^m_q :
			& B^{k,2(s-1),s-1}_{{q},\mathrm{for}}(X_T)
			& \to
			& B^{k-m,2(s-1),s-1}_{q,\mathrm{for}}(X_T),\ m\leq k
			\\[.2cm]
			\Delta_q :
			& B^{k,2s,s}_{q,\mathrm{vel}} (X_T)
			& \to
			& B^{k,2(s-1),s-1}_{q,\mathrm{for}}(X_T),
			\\[.2cm]
			\partial_t :
			& B^{k,2s,s}_{q,\mathrm{vel}}(X_T)
			& \to
			& B^{k,2(s-1),s-1}_{q,\mathrm{for}}(X_T),\\
		\end{array}
		$$
		are continuous.
		Besides, if 
		$w, v \in B^{k+2,2(s-1),s-1}_{{i},\mathrm{vel}}(X_T)$
		then the mappings
		
		\begin{equation}\label{eq8}
			\begin{array}{rrcl}
				\B(w,\cdot) :
				& B^{k+2,2(s-1),s-1}_{q,\mathrm{vel}}(X_T) 
				& \to
				& B^{k,2(s-1),s-1}_{q,\mathrm{for}}(X_T),
				\\
				\B (w,\cdot) :
				& B^{k,2s,s}_{q,\mathrm{vel}}(X_T)
				& \to
				& B^{k,2(s-1),s-1}_{q,\mathrm{for}}(X_T),\\
			\end{array}
		\end{equation}
		are continuous, too. In particular, for all $w, v \in B^{k+2,2(s-1),s-1}_{q, \mathrm{vel}}(X_T)$ there is positive constant $c_{s,k} $, independent on $ v $ and $w$, such that  
		\begin{equation}\label{eq.B.pos.bound}
			\|  \B (w,v)\|_{B^{k,2(s-1),s-1}_{q,\mathrm{for}}}
			\leq c_{s,k} 
			\|w\|_{B^{k+2,2(s-1),s-1}_{q,\mathrm{vel}} } 
			\|v\|_{B^{k+2,2(s-1),s-1}_{q,\mathrm{vel}} }.
		\end{equation}
	\end{theorem}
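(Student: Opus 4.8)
The proof rests on two ingredients. First, by Theorem~\ref{parametrix.laplas} together with the finite-dimensionality and smoothness of $\mathcal{H}^q$, for each $k\in\mathbb{Z}_+$ the norm $\bigl(\sum_{0\le l\le k}\|\nabla^l_q u\|^2_{L^2_{\Lambda^q}}\bigr)^{1/2}$ is equivalent to $\|u\|_{H^k_{\Lambda^q}}$; since $\nabla^m_q$ commutes with $\partial_t$ and intertwines the Laplacians ($d_q\Delta_q=\Delta_{q+1}d_q$, $d_{q-1}^*\Delta_q=\Delta_{q-1}d_{q-1}^*$), this lets us rewrite the $B$-norms as anisotropic Bochner--Sobolev norms: up to equivalence, $u\in B^{k,2s,s}_{q,\mathrm{vel}}(X_T)$ means precisely that $\partial_t^j u\in C(I,H^{k+2s-2j}_{\Lambda^q})\cap L^2(I,H^{k+2s-2j+1}_{\Lambda^q})$ for all $0\le j\le s$ together with $d_{q-1}^*u=0$, and likewise for $B^{k,2s,s}_{q,\mathrm{for}}(X_T)$ without the constraint. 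Second, we use the standard Sobolev multiplication theorem on the compact manifold $X$: if $a,b,c\ge 0$, $c\le\min\{a,b\}$ and $a+b-c>n/2$, then the pointwise bilinear bundle maps $M_{q,1},M_{q,2}$ (which obey \eqref{eq9}) induce continuous bilinear maps $H^a\times H^b\to H^c$ on $X$. I would first record both facts and the reformulation.

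With the reformulation in hand the three linear statements are immediate. The operator $\nabla^m_q$ commutes with $\partial_t$ and, by the first ingredient, lowers the spatial smoothness by exactly $m$ (modulo harmonic terms), so it maps $B^{k,2(s-1),s-1}_{q,\mathrm{for}}$ into $B^{k-m,2(s-1),s-1}_{q,\mathrm{for}}$ whenever $m\le k$, keeping the target index nonnegative. For $\Delta_q=\nabla^2_q$ the same reasoning, with $s$ in place of $s-1$, gives $\Delta_q:B^{k,2s,s}_{q,\mathrm{vel}}\to B^{k,2(s-1),s-1}_{q,\mathrm{for}}$ (the image is even annihilated by $d_{q-1}^*$, though this is not needed). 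Finally $\partial_t$ sends $\partial_t^ju$ to $\partial_t^{j+1}u$, i.e.\ it raises the time index by one at the cost of two units of spatial smoothness, which is exactly the passage from $B^{k,2s,s}_{q,\mathrm{vel}}$ to $B^{k,2(s-1),s-1}_{q,\mathrm{for}}$. Each of these is a quantitative restatement of the first ingredient.

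For the bilinear operator, fix $w,v$ and put $\sigma:=k+2s$, the top spatial smoothness of $B^{k+2,2(s-1),s-1}_{q,\mathrm{vel}}$. By Leibniz, for $0\le j\le s-1$ the form $\partial_t^j\mathbf{B}_q(w,v)$ is a binomial combination of the terms
\[
M_{q,1}\bigl(d_q\partial_t^i w,\,\partial_t^{j-i}v\bigr),\qquad
d_{q-1}\bigl(M_{q,2}(\partial_t^i w,\,\partial_t^{j-i}v)\bigr),
\]
together with the terms obtained by swapping the roles of $w$ and $v$, where $0\le i\le j$. By the reformulation $\partial_t^iw,\partial_t^iv\in C(I,H^{\sigma-2i}_{\Lambda^q})\cap L^2(I,H^{\sigma-2i+1}_{\Lambda^q})$, hence $d_q\partial_t^iw\in C(I,H^{\sigma-1-2i}_{\Lambda^{q+1}})$, and so on. To conclude $\mathbf{B}_q(w,v)\in B^{k,2(s-1),s-1}_{q,\mathrm{for}}(X_T)$ one must show $\partial_t^j\mathbf{B}_q(w,v)\in C(I,H^{\sigma-2-2j}_{\Lambda^q})\cap L^2(I,H^{\sigma-1-2j}_{\Lambda^q})$ for $j\le s-1$. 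The $C(I,\cdot)$-bound for the first term follows from the multiplication theorem: $c:=\sigma-2-2j$ satisfies $c\le\sigma-1-2i$ and $c\le\sigma-2(j-i)$ because $0\le i\le j$, and $(\sigma-1-2i)+(\sigma-2(j-i))-c=\sigma+1>n/2$ since $2s+k>n/2-1$; the $d_{q-1}$-term is handled the same way, the extra derivative of $d_{q-1}$ being absorbed by running the multiplication theorem into $H^{\sigma-1-2j}$ rather than $H^{\sigma-2-2j}$, still licit by the same inequalities. The $L^2(I,\cdot)$-bounds are obtained identically, keeping one factor in its $C(I,\cdot)$-norm and the other in its $L^2(I,\cdot)$-norm (the latter carries one extra unit of spatial smoothness, which only helps). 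Summing over $i,j$, and then over the $\nabla^l_q\nabla^m_q$-indices via the first ingredient, yields continuity of the first map in \eqref{eq8} and the estimate \eqref{eq.B.pos.bound}; the second map in \eqref{eq8} is the same computation, since $B^{k,2s,s}_{q,\mathrm{vel}}$ has the very same top spatial smoothness $k+2s=\sigma$ and agrees with $B^{k+2,2(s-1),s-1}_{q,\mathrm{vel}}$ on all time derivatives of order $\le s-1$, which is the only range used by Leibniz.

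Two points are genuine obstacles. The first is justifying the reformulation: one must verify that the complex-adapted gradients $\nabla^m_q$ --- which for odd $m$ take values in $\Lambda^{q+1}\oplus\Lambda^{q-1}$ --- generate, after summation in $l$ and inclusion of the $l=0$ term, a norm equivalent to the full $H^{k+2s}_{\Lambda^q}$-norm modulo the projections $\Pi^q$, which is exactly where Theorem~\ref{parametrix.laplas} and the intertwining identities for $\Delta_q$ are needed. The second is the borderline index bookkeeping in the bilinear step: for \emph{every} summand produced by Leibniz one must check the three conditions $c\le a$, $c\le b$, $a+b-c>n/2$ of the multiplication theorem, and it is precisely the single hypothesis $2s+k>n/2-1$ that guarantees the worst case. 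Everything else is routine once these two points are settled.
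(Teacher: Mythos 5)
The paper does not actually prove Theorem~\ref{l.NS.cont.0}: its ``proof'' consists of the single line ``See, for instance, \cite{ShTar21} or \cite{N28B}.'' There is therefore no internal argument to compare yours against; what can be said is that your proof is correct and follows the route those references take. Your two ingredients are the right ones: (i) the reduction of the $B$-norms to anisotropic Bochner--Sobolev norms $\sum_{j\le s}\|\partial_t^j u\|_{C(I,H^{k+2s-2j})}+\dots$ via the identity $\sum_{|\alpha|=m}\|\partial^\alpha u\|^2_{L^2}=\|\nabla^m_q u\|^2_{L^2}$ and Theorem~\ref{parametrix.laplas}, after which the three linear mappings are immediate order counts; and (ii) the Leibniz rule in $t$ combined with the Sobolev multiplication theorem $H^a\times H^b\to H^c$ (for $c\le\min\{a,b\}$, $a+b-c>n/2$) applied to each term of $\B(w,v)$, the hypothesis $2s+k>\frac{n}{2}-1$ being precisely $a+b-c=\sigma+1>\frac{n}{2}$ with $\sigma=k+2s$ in every summand. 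I have checked your index bookkeeping for the $M_{q,1}(d_q\,\cdot,\cdot)$ terms and the $d_{q-1}M_{q,2}$ terms, in both the $C(I,\cdot)$ and $L^2(I,\cdot)$ scales, and it is right; your observation that $B^{k,2s,s}_{q,\mathrm{vel}}$ and $B^{k+2,2(s-1),s-1}_{q,\mathrm{vel}}$ agree on all data used by the Leibniz expansion correctly disposes of the second map in \eqref{eq8}. The only place where your write-up is a sketch rather than a proof is the norm-equivalence step you yourself flag: for odd $m$ the iterated operators $\nabla^l_q\nabla^m_q$ take values in $\Lambda^{q+1}\oplus\Lambda^{q-1}$ and must be identified with $\nabla^{l+m}$ on each summand through the intertwining relations $d_q\Delta_q=\Delta_{q+1}d_q$ and $d_{q-1}^*\Delta_q=\Delta_{q-1}d_{q-1}^*$; this is routine but should be written out once, and doing so would also clarify the paper's own slightly imprecise use of $V^{k+2s-m-2j}_{\Lambda^q}$ as the target bundle for odd $m$. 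Since the paper supplies no proof of its own, your argument is, if anything, more complete than the source.
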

	
	\begin{proof}
		See, for instance, \cite{ShTar21} or \cite{N28B}.
	\end{proof}
	
	Let us introduce now the Helmholtz type projection 
	$\mathrm {P}^q$ from $ B^{k,2(s-1),s-1}_{q,\mathrm {for}} (X_T)$ to the kernel of operator $ d_q^* $.
	
	\begin{lemma}\label{proector} 
		If $ s $, $ k \in \mathbb{Z}_+ $, then for each $q$ the pseudo-differential operator $ \mathrm {P}^q = d_q^* d_q \varphi^q + \Pi^q$ on $ X $ induce continuous map
		\begin{equation}\label{cont.proetor}
			\mathrm {P}^q: B^{k,2(s-1),s-1}_{q,\mathrm {for}} (X_T)\to B^{k,2(s-1),s-1}_{q, \mathrm {vel}} (X_T),
		\end{equation}
		such that
		\begin{equation*}%\label{key}
			\mathrm {P}^q\circ \mathrm {P}^q u = \mathrm {P}^q u,\quad
			(\mathrm {P}^q u, v)_{L^2_{\Lambda^q}(X)} = (u, \mathrm {P}^q v)_{L^2_{\Lambda^q}(X)},\quad
			(\mathrm {P}^q u, (I-\mathrm {P}^q) u)_{L^2_{\Lambda^q}(X)} = 0
		\end{equation*}
		for all $ u,v\in B^{k,2(s-1),s-1}_{q,\mathrm {for}} $.
	\end{lemma}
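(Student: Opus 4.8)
The plan is to recognise $\mathrm{P}^q$ as the $L^2$-orthogonal projection of $L^2_{\Lambda^q}$ onto the closed subspace $V^0_{\Lambda^q}=\{u\in L^2_{\Lambda^q}: d_{q-1}^*u=0\}$ and then to read off each asserted property. Since $\Delta_q=d_q^*d_q+d_{q-1}d_{q-1}^*$, the identity $\Delta_q\varphi^q=I-\Pi^q$ of Theorem~\ref{parametrix.laplas} applied to a form $u$ yields the Hodge-type decomposition
\begin{equation}\label{hodge.split}
	u=\Pi^q u+d_q^*d_q\varphi^q u+d_{q-1}d_{q-1}^*\varphi^q u,
\end{equation}
whose three terms are pairwise orthogonal in $L^2_{\Lambda^q}$ — this uses only $d_qd_{q-1}=0$ (hence also $d_{q-1}^*d_q^*=0$ by passing to adjoints) together with $d_q\Pi^q u=0$ and $d_{q-1}^*\Pi^q u=0$, which hold because $\Pi^q u\in\mathcal H^q$. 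From~\eqref{hodge.split} and $\mathrm{P}^q=d_q^*d_q\varphi^q+\Pi^q$ we obtain the working formulas $\mathrm{P}^q u=u-d_{q-1}d_{q-1}^*\varphi^q u$ and $(I-\mathrm{P}^q)u=d_{q-1}d_{q-1}^*\varphi^q u$, whence $d_q\mathrm{P}^q u=d_q u$ (as $d_qd_{q-1}=0$), $d_{q-1}^*\mathrm{P}^q u=0$ (as $d_{q-1}^*d_q^*=0$ and $d_{q-1}^*\Pi^q u=0$), and $(I-\mathrm{P}^q)u\in\mathrm{im}\,d_{q-1}\subseteq (V^0_{\Lambda^q})^{\perp}$, because $(d_{q-1}\gamma,v)_{L^2_{\Lambda^q}}=(\gamma,d_{q-1}^*v)_{L^2_{\Lambda^{q-1}}}=0$ whenever $d_{q-1}^*v=0$. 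In particular $\mathrm{P}^q$ maps any sufficiently regular form into $V^0_{\Lambda^q}$.

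Next I would derive the three identities. For $v\in V^0_{\Lambda^q}$, applying~\eqref{hodge.split} to $v$ and pairing with $v$ gives, by the orthogonality just noted, $\|d_{q-1}d_{q-1}^*\varphi^q v\|_{L^2_{\Lambda^q}}^2=(v,d_{q-1}d_{q-1}^*\varphi^q v)_{L^2_{\Lambda^q}}=(d_{q-1}^*v,d_{q-1}^*\varphi^q v)_{L^2_{\Lambda^{q-1}}}=0$, so $\mathrm{P}^q v=v$; since $\mathrm{P}^q u\in V^0_{\Lambda^q}$ this gives idempotency $\mathrm{P}^q\circ\mathrm{P}^q u=\mathrm{P}^q u$. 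Combined with $\mathrm{im}\,\mathrm{P}^q\subseteq V^0_{\Lambda^q}$ and $(I-\mathrm{P}^q)(\cdot)\in(V^0_{\Lambda^q})^{\perp}$, this exhibits $\mathrm{P}^q$ as the orthogonal projection onto $V^0_{\Lambda^q}$, so that $(\mathrm{P}^q u,v)_{L^2_{\Lambda^q}}=(\mathrm{P}^q u,\mathrm{P}^q v)_{L^2_{\Lambda^q}}=(u,\mathrm{P}^q v)_{L^2_{\Lambda^q}}$ (splitting $v$, resp.\ $u$, through $\mathrm{P}^q$ and using the two orthogonalities), and $(\mathrm{P}^q u,(I-\mathrm{P}^q)u)_{L^2_{\Lambda^q}}=(\mathrm{P}^q u,u)_{L^2_{\Lambda^q}}-(\mathrm{P}^q u,\mathrm{P}^q u)_{L^2_{\Lambda^q}}=0$ by idempotency and self-adjointness. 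This identification of $\mathrm{P}^q$ is the conceptual heart of the lemma; the remaining claims are bookkeeping.

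For the continuity of~\eqref{cont.proetor}, note that $\varphi^q:H^a_{\Lambda^q}\to H^{a+2}_{\Lambda^q}$ is bounded for every $a\in\Z_+$ (Theorem~\ref{parametrix.laplas}), that $d_q^*d_q$ and $d_{q-1}d_{q-1}^*$ are second-order differential operators, and that $\Pi^q$ has finite rank with smooth range; hence $R:=d_{q-1}d_{q-1}^*\varphi^q$ and $\mathrm{P}^q=I-R$ are bounded on each $H^a_{\Lambda^q}$ and, being independent of $t$, commute with $\partial_t$. Thus $\nabla^l_q\nabla^m_q\partial_t^j\mathrm{P}^q u=\nabla^l_q\nabla^m_q\partial_t^j u-\nabla^l_q\nabla^m_q R\partial_t^j u$, where the first term is a summand of $\|u\|_{B^{k,2(s-1),s-1}_{q,\mathrm{for}}}$ and, by the identity $\sum_{|\alpha|=N}\|\partial^\alpha w\|^2_{L^2_{\Lambda^q}}=\|\nabla^N_q w\|^2_{L^2_{\Lambda^q}}$, the second satisfies $\|\nabla^l_q\nabla^m_q R\partial_t^j u\|_{L^2}\le C\|R\partial_t^j u\|_{H^{l+m}}\le C'\|\partial_t^j u\|_{H^{l+m}}$, which for $m+2j\le 2(s-1)$ and $l\le k$ is again controlled by $\|u\|_{B^{k,2(s-1),s-1}_{q,\mathrm{for}}}$; the same works with $C(I,L^2_{\Lambda^q})$ in place of $L^2(I,L^2_{\Lambda^q})$. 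Summing over the admissible indices gives the norm bound, and since $d_{q-1}^*\mathrm{P}^q u=0$ — a property that transfers to $\partial_t^j\mathrm{P}^q u$ and, because $\mathrm{P}^q$ commutes with $\Delta_q$, to the even-order spatial derivatives — the image lies in $B^{k,2(s-1),s-1}_{q,\mathrm{vel}}(X_T)$. The only mild subtlety here is the change of bundle degree effected by $\nabla^m_q$ for odd $m$, which is harmless since the norm registers only the $L^2$-Sobolev regularity of the corresponding components.
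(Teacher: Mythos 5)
Your argument is correct and complete: the identity $\Delta_q\varphi^q=I-\Pi^q$ does yield the orthogonal splitting $u=\Pi^q u+d_q^*d_q\varphi^q u+d_{q-1}d_{q-1}^*\varphi^q u$, from which the identification of $\mathrm{P}^q$ as the orthogonal projection onto $\ker d_{q-1}^*$, the three algebraic identities, and the boundedness on each $H^a_{\Lambda^q}$ (hence on the Bochner--Sobolev scales, using that $\mathrm{P}^q$ is time-independent and commutes with $\Delta_q$) all follow as you say. The paper itself gives no proof here, only a citation to \cite{N28B}; your Hodge-decomposition argument is exactly the standard construction of this Helmholtz--Leray type projection that the cited reference carries out, so the approach is essentially the same.
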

	\begin{proof}%\label{key}
		See, for instance, \cite{N28B}.
	\end{proof}
	The following Lemma is just a consequence of Hodge Theorem \ref{parametrix.laplas}.
	\begin{lemma}\label{p.nabla.Bochner} 
		Let $F\in B^{k,2(s-1),s-1}_{q, \mathrm {for}} (X_T)$ 
		satisfy $\mathrm {P}^q F=0$ in $X_T$.  
		Then there is a unique section 
		$p \in B^{k+1,2(s-1),s-1}_{q-1,\mathrm {pre}}(X_T)$
		such that (\ref{eq.p.ort.0}) holds
		and 
		\begin{equation}\label{eq.nabla.Bochner}
			d_{q-1} p = F \mbox{ in } X \times [0,T].
		\end{equation}
	\end{lemma}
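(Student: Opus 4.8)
The plan is to write $p$ explicitly by means of the Hodge parametrix $\varphi^q$ of Theorem~\ref{parametrix.laplas} and then check the side conditions and the required Bochner--Sobolev regularity. First I would unravel the hypothesis. Since $\mathrm{P}^q = d_q^* d_q \varphi^q + \Pi^q$, the assumption $\mathrm{P}^q F = 0$ reads $d_q^* d_q \varphi^q F + \Pi^q F = 0$ in $X_T$. At each fixed $t$ the first summand lies in $\mathrm{Range}\,d_q^*$ and the second in $\mathcal{H}^q$, and these two subspaces of $L^2_{\Lambda^q}$ are mutually orthogonal, because $(d_q^* w, h)_{L^2_{\Lambda^q}} = (w, d_q h)_{L^2_{\Lambda^{q+1}}} = 0$ for $h \in \mathcal{H}^q$; hence $d_q^* d_q \varphi^q F = 0$ and $\Pi^q F = 0$ separately in $X_T$. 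In particular $F(\cdot,t) \perp \mathcal{H}^q$, so \eqref{eq22} gives $\Delta_q \varphi^q F = F - \Pi^q F = F$, and inserting $\Delta_q = d_q^* d_q + d_{q-1} d_{q-1}^*$ together with the first identity yields $F = d_{q-1}\bigl(d_{q-1}^* \varphi^q F\bigr)$. This dictates the choice $p := d_{q-1}^* \varphi^q F$.

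Next I would check that this $p$ does the job. Equation \eqref{eq.nabla.Bochner} holds by the identity just derived. The remaining side conditions are immediate: $d_{q-2}^* p = d_{q-2}^* d_{q-1}^* \varphi^q F = 0$ because $d_{q-1} d_{q-2} = 0$ (take adjoints), and for any $h \in \mathcal{H}^{q-1}$ one has $(p, h)_{L^2_{\Lambda^{q-1}}} = (\varphi^q F, d_{q-1} h)_{L^2_{\Lambda^q}} = 0$ since $d_{q-1} h = 0$, which is \eqref{eq.p.ort.0}.

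The substantive point is that $p$ belongs to $B^{k+1,2(s-1),s-1}_{q-1,\mathrm{pre}}(X_T)$. Here I would use that the composition $d_{q-1}^* \varphi^q$ commutes with $\partial_t$ (the variable $t$ entering only as a parameter) and, by Theorem~\ref{parametrix.laplas}(3) together with the boundedness of the first-order operator $d_{q-1}^*$, is bounded $H^\sigma_{\Lambda^q} \to H^{\sigma+1}_{\Lambda^{q-1}}$ for every $\sigma \in \mathbb{Z}_+$. Applying it in each time slice to the base inclusion $F \in C(I, H^{2(s-1)+k}_{\Lambda^q}) \cap L^2(I, H^{2(s-1)+k+1}_{\Lambda^q})$ places $p$ in $C(I, H^{2(s-1)+k+1}_{\Lambda^{q-1}}) \cap L^2(I, H^{2(s-1)+k+2}_{\Lambda^{q-1}})$; together with $d_{q-1} p = F \in B^{k,2(s-1),s-1}_{q,\mathrm{for}}(X_T)$ and the two conditions of the previous paragraph, this is exactly the definition of $B^{k+1,2(s-1),s-1}_{q-1,\mathrm{pre}}(X_T)$, and in fact $\|p\|_{B^{k+1,2(s-1),s-1}_{q-1,\mathrm{pre}}} = \|F\|_{B^{k,2(s-1),s-1}_{q,\mathrm{for}}}$.

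Finally, for uniqueness, given two such sections $p_1, p_2$ the difference $r := p_1 - p_2$ satisfies $d_{q-1} r = 0$ and $d_{q-2}^* r = 0$ in $X_T$, hence $\Delta_{q-1} r(\cdot, t) = 0$ for almost every $t$; ellipticity of $\Delta_{q-1}$ forces each $r(\cdot, t)$ to be smooth, so $r(\cdot, t) \in \mathcal{H}^{q-1}$, and then \eqref{eq.p.ort.0} applied with $h = r(\cdot, t)$ gives $\|r(\cdot, t)\|_{L^2_{\Lambda^{q-1}}} = 0$, whence $r \equiv 0$. I expect the only step requiring any genuine care to be the regularity bookkeeping of the third paragraph, namely confirming that $p$, with all its mixed space--time derivatives, lands precisely in $B^{k+1,2(s-1),s-1}_{q-1,\mathrm{pre}}(X_T)$; everything else is linear Hodge-theoretic algebra resting on Theorem~\ref{parametrix.laplas}.
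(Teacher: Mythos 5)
Your proof is correct and follows exactly the route the paper intends: the paper gives no details, stating only that the lemma ``is just a consequence of Hodge Theorem \ref{parametrix.laplas},'' and your construction $p = d_{q-1}^*\varphi^q F$ via the parametrix identities \eqref{eq22}, together with the orthogonality argument splitting $\mathrm{P}^q F = 0$ into $d_q^* d_q \varphi^q F = 0$ and $\Pi^q F = 0$, is precisely that consequence spelled out. The verification of the side conditions, the regularity bookkeeping, and the uniqueness argument via $r(\cdot,t)\in\mathcal{H}^{q-1}$ are all sound.
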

	Now we are ready to go to the main section of this paper.
	
	\section{Existence theorem}
	
	In order to get existence theorem to the Problem \eqref{eq34} we use a projection to the next step of complex \eqref{key}.
	Namely, applying an operator $ d_2 = \mathrm{div} $ to the equation \eqref{eq34} we have
	\begin{equation}\label{eq37}
		\begin{cases}
			\partial_t \mathrm{div} u - \mu\, \mathrm{div}(\nabla  \mathrm{div}\, u) + \mathrm{div}((\mathrm{div}\,u) u) = \mathrm{div}f& \text{in } X \times (0,T),\\
			\mathrm{div}\, u(x,0) = \mathrm{div}\, u_0& \text{in } X,
		\end{cases}
	\end{equation}
because of $ \mathrm{rot}\, u =0 $ and $ \mathrm{div} \circ \mathrm{rot} \equiv 0 $.
Now, 
\[
\mathrm{div}((\mathrm{div}\,u) u) = (\mathrm{div}\,u)^2 + \Delta u\cdot u = (\mathrm{div}\,u)^2 + \nabla  \mathrm{div}\, u\cdot u.
\]
By Theorem \ref{parametrix.laplas} 
\[
u = 	\varphi^2 \Delta_{2} u + \Pi^2 u = \varphi^2 \nabla  \mathrm{div}\, u + \Pi^2 u.
\]
	Denote
	\[
	g= \mathrm{div}\, u,
	\]
	then we can rewrite \eqref{eq37} by the next way
	\begin{equation}\label{eq38}
		\begin{cases}
			\partial_t g - \mu\, \mathrm{div}(\nabla  g) + g^2 + \nabla  g \cdot (\varphi^2 \nabla  g + \Pi^2 u) = \mathrm{div}f& \text{in } X \times (0,T),\\
			g(x,0) = \mathrm{div}\, u_0& \text{in } X.
		\end{cases}
	\end{equation}

	\begin{theorem}
		\label{t.weak.g}
		Given any pair $(f,u_0) \in L^2 (I,(V^0_{\Lambda^{2}})') \times V^1_{\Lambda^{2}}$. There is time $ t_0\in (0,T] $ such that for all $ t\in [0,t_0] $ there exist a differential form
		$g \in C (I,L^2_{\Lambda^{3}}) \cap L^2 (I,H_{\Lambda^{3}}^1)$ with
		$\partial_t g \in L^2 (I,(H_{\Lambda^{3}}^1)')$,
		satisfying
		\begin{equation}
			\label{eq.weak_g}
			\left\{
			\begin{array}{rcl}
				\displaystyle
				\frac{d}{dt} (g,v)_{{L}_{{\Lambda^{3}}}^2}
				+ \mu (\nabla g, \nabla v)
				_{{L}_{{\Lambda^{2}}}^2}
				& =
				& \langle \mathrm{div}\,f - g^2 -\nabla g\cdot(\varphi^2\,\nabla g + \Pi^2 u), v \rangle,
				\\
				g (\cdot,0)
				& =
				& \mathrm{div}\, u_0
			\end{array}
			\right.
		\end{equation}
		for all $v \in H_{\Lambda^{3}}^k$ with $ k\geq 2 $. 
	\end{theorem}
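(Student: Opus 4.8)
The plan is to construct $g$ by the Galerkin method, derive an energy estimate uniform in the approximation parameter, and pass to the limit with the Aubin--Lions compactness lemma. The finite-dimensional harmonic term $\Pi^2u$ is treated as a prescribed smooth form in $\mathcal H^2\subset C^\infty_{\Lambda^2}$; its precise value is immaterial, since $\mathrm{div}\,\Pi^2u=d_2\Pi^2u=0$ will make the term it produces drop out of the basic energy identity. Concretely, let $\{e_j\}_{j\ge1}$ be an $L^2_{\Lambda^3}$-orthonormal basis of eigenforms of $\Delta_3$ (smooth by elliptic regularity, with $e_1\in\mathcal H^3$), and let $P_m$ be the $L^2$-orthogonal projection onto $E_m:=\mathrm{span}(e_1,\dots,e_m)$, which is bounded on every $H^s_{\Lambda^3}$ uniformly in $m$. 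I seek $g_m(t)=\sum_{j\le m}c_j^m(t)e_j$ with $g_m(0)=P_m\,\mathrm{div}\,u_0$ and $\tfrac{d}{dt}(g_m,e_j)+\mu(\nabla g_m,\nabla e_j)=\langle \mathrm{div}\,f-g_m^2-\nabla g_m\cdot(\varphi^2\nabla g_m+\Pi^2u),e_j\rangle$ for $j\le m$; the right-hand side is quadratic in the $c_j^m$, so a unique maximal solution exists on some $[0,t_m)$. Note that $\mathrm{div}\,f=d_2f$ is a well-defined element of $(H^1_{\Lambda^3})'$, since $d_2^*$ maps $H^1_{\Lambda^3}$ into $V^0_{\Lambda^2}$.

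Testing with $g_m$ yields the identity
\[
\tfrac12\tfrac{d}{dt}\|g_m\|_{L^2}^2+\mu\|\nabla g_m\|_{L^2}^2=\langle \mathrm{div}\,f,g_m\rangle-\int_X g_m^3\,dx-\int_X(\nabla g_m\cdot\varphi^2\nabla g_m)\,g_m\,dx-\int_X(\nabla g_m\cdot\Pi^2u)\,g_m\,dx .
\]
The last integral equals $-\tfrac12\int_X g_m^2\,\mathrm{div}\,\Pi^2u\,dx=0$; the forcing term is bounded by $\tfrac\mu8\|\nabla g_m\|^2+C\|f\|_{(V^0_{\Lambda^2})'}^2$ via $\langle\mathrm{div}\,f,g_m\rangle=\langle f,d_2^*g_m\rangle$ and $\|d_2^*g_m\|_{V^0_{\Lambda^2}}=\|\nabla g_m\|_{L^2}$; integrating by parts in the third integral gives $-\tfrac12\int_X(\mathrm{div}\,\varphi^2\nabla g_m)\,g_m^2\,dx$, and since $\varphi^2$ gains two derivatives (Theorem~\ref{parametrix.laplas}), so that $\mathrm{div}\,\varphi^2\nabla g_m\in H^1\hookrightarrow L^6$ as $n=3$, the Sobolev embedding together with interpolation bound it by $\varepsilon\|\nabla g_m\|^2+C_\varepsilon(1+\|g_m\|_{L^2}^6)$; the cubic integral is $\le\|g_m\|_{L^3}^3\le C\|g_m\|_{L^2}^{3/2}\|g_m\|_{H^1}^{3/2}\le\varepsilon\|\nabla g_m\|^2+C_\varepsilon(1+\|g_m\|_{L^2}^6)$ by Gagliardo--Nirenberg. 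Choosing $\varepsilon$ small and putting $y:=\|g_m\|_{L^2}^2$ one obtains $y'+\tfrac\mu2\|\nabla g_m\|^2\le C(\|f(t)\|_{(V^0_{\Lambda^2})'}^2+1+y^3)$, and a Gronwall-type comparison with the corresponding scalar ODE produces a time $t_0\in(0,T]$, depending only on $\|u_0\|_{V^1_{\Lambda^2}}$ and $\|f\|_{L^2(I,(V^0_{\Lambda^2})')}$, on which $y$ — and, after integrating in $t$, $\int_0^{t_0}\|\nabla g_m\|^2$ — stays bounded uniformly in $m$; in particular $t_m\ge t_0$, and $\{g_m\}$ is bounded in $L^\infty(0,t_0;L^2_{\Lambda^3})\cap L^2(0,t_0;H^1_{\Lambda^3})$. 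Using $\langle\partial_t g_m,v\rangle=\langle\partial_t g_m,P_mv\rangle$, the uniform $H^1$-boundedness of $P_m$, and the same integration by parts, $\{\partial_t g_m\}$ is bounded in $L^2(0,t_0;(H^1_{\Lambda^3})')$ (the worst term, $\nabla g_m\cdot\varphi^2\nabla g_m$, contributes $C\|g_m\|_{L^2}\|\nabla g_m\|_{L^2}\in L^2(0,t_0)$).

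Since $H^1_{\Lambda^3}\hookrightarrow L^2_{\Lambda^3}$ is compact and $L^2_{\Lambda^3}\hookrightarrow(H^1_{\Lambda^3})'$, the Aubin--Lions lemma gives a subsequence with $g_m\to g$ strongly in $L^2(0,t_0;L^2_{\Lambda^3})$ and a.e., weakly in $L^2(0,t_0;H^1_{\Lambda^3})$, weakly-$*$ in $L^\infty(0,t_0;L^2_{\Lambda^3})$, and $\partial_t g_m\rightharpoonup\partial_t g$ in $L^2(0,t_0;(H^1_{\Lambda^3})')$. Linear terms pass to the limit at once. For $g_m^2$: the family $\{g_m\}$ is bounded in $L^{10/3}(X\times(0,t_0))$ (interpolating $L^\infty_tL^2_x$ with $L^2_tL^6_x$) and converges a.e., so $g_m^2\to g^2$ in $L^1$. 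For $\nabla g_m\cdot\varphi^2\nabla g_m$: one has $\varphi^2\nabla g_m=-\varphi^2d_2^*g_m$, and $\varphi^2d_2^*$, being a pseudodifferential operator of order $-1$, is bounded $L^2_{\Lambda^3}\to L^2_{\Lambda^2}$, so $\varphi^2\nabla g_m\to\varphi^2\nabla g$ strongly in $L^2(0,t_0;L^2_{\Lambda^2})$, which against the weak convergence $\nabla g_m\rightharpoonup\nabla g$ in the same space identifies the limit of the quadratic term. Hence $g$ satisfies the first relation of \eqref{eq.weak_g} tested against each $e_j$; by density of $\bigcup_mE_m$ in $H^k_{\Lambda^3}$ and continuity in $v$ of every term with respect to the $H^k$-norm (here $k\ge2$ ensures $v\in L^\infty$, so the pairing with $g^2$ makes sense), it holds for all $v\in H^k_{\Lambda^3}$ with $k\ge2$. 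Finally, $g\in L^2(0,t_0;H^1_{\Lambda^3})$ with $\partial_t g\in L^2(0,t_0;(H^1_{\Lambda^3})')$ forces $g\in C([0,t_0];L^2_{\Lambda^3})$, and comparing the time-integrated weak identities for $g$ and for $g_m$ (with $g_m(0)=P_m\,\mathrm{div}\,u_0\to\mathrm{div}\,u_0$ in $L^2_{\Lambda^3}$, legitimate since $u_0\in V^1_{\Lambda^2}\subset H^1_{\Lambda^2}$) yields $g(\cdot,0)=\mathrm{div}\,u_0$.

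The main obstacle is the a priori estimate for arbitrary data: after pairing with $g$, both $g^2$ and $\nabla g\cdot\varphi^2\nabla g$ are cubic, and the naive bound of the second, $C\|\nabla g\|^2\|g\|_{L^2}$, cannot be absorbed by the dissipation unless $\|g\|_{L^2}$ is small. The integration by parts — exploiting that $\varphi^2$ gains two derivatives, so that $\mathrm{div}\,\varphi^2\nabla g\in H^1\hookrightarrow L^6$ — is precisely what removes the smallness requirement, at the price of a merely local existence time $t_0$ dictated by the genuinely cubic term $\int_X g^3\,dx$. The secondary difficulty, identifying the weak limit of $\nabla g_m\cdot\varphi^2\nabla g_m$, is resolved by the order $-1$ smoothing of $\varphi^2d_2^*$, which turns strong $L^2$-convergence of $g_m$ into strong convergence of $\varphi^2\nabla g_m$, enough to compensate the only-weak convergence of $\nabla g_m$.
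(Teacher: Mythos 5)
Your proposal is correct and follows essentially the same route as the paper's proof: a Faedo--Galerkin scheme, an energy identity in which the $\Pi^2 u$ term drops out because $\mathrm{div}\,\Pi^2 u = 0$ and the term $\nabla g_m\cdot\varphi^2\nabla g_m$ is tamed by integration by parts exploiting the two-derivative gain of the parametrix, a cubic Gronwall-type inequality producing a small existence time $t_0$, and weak/weak-$*$ compactness to pass to the limit. The only execution differences are that the paper builds the approximations as $g_m=\mathrm{div}\,u_m$ from a Galerkin basis of $2$-forms in $V^1_{\Lambda^2}$ (so $\Pi^2 u_m$ is part of the coupled finite-dimensional system rather than prescribed data, which is what ties this theorem to the later construction of $u$), and that after integrating by parts it uses the exact identity $\mathrm{div}\,\varphi^2\nabla g_m=\varphi^2\Delta g_m=g_m$ to reduce that term to $-\tfrac{1}{2}\int_X g_m^3\,dx$, whereas you bound the order-zero operator $\mathrm{div}\,\varphi^2\nabla$ through $H^1\hookrightarrow L^6$ and interpolation --- both yield the same $\|g_m\|_{L^2}^6$ growth --- while your limit passage (Aubin--Lions, identification of the quadratic limits) spells out what the paper leaves to the standard argument.
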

	
	\begin{proof}%\label{key}
		Let 
		$\{u_m\}$ be the sequence of Faedo-Galerkin approximations, 
		namely, 
		\begin{equation}\label{eq36}
			u_m = \sum_{j=1}^M c^{(m)}_{j} (t) b_j (x),
		\end{equation}
	then
	\begin{equation}\label{faedo}
		g_m= \mathrm{div}\, u_m = \sum_{j=1}^M c^{(m)}_{j} (t)\, \mathrm{div}\,b_j (x),
	\end{equation}
		where the system $\{ b_j \}_{j \in {\mathbb N}}$ 
		is a ${L}_{\Lambda^{2}}^2 (X)$-orthogonal basis in $V^1_{\Lambda^{2}}$ and the functions $u_{m} $ satisfy the following relations 
		\begin{equation} \label{eq_edin_1}
			\frac{d}{dt} (g_m,\mathrm{div}\, b_j)_{{L}_{{\Lambda^{3}}}^2}
			+ \mu (\nabla g_m, \nabla \mathrm{div}\, b_j)
			_{{L}_{{\Lambda^{2}}}^2}
			 = \end{equation}
		 \[
			\langle \mathrm{div}\,f - g_m^2 -\nabla g_m\cdot\varphi^2\,\nabla g_m - \nabla g_m\cdot \Pi^2 u_m, \mathrm{div}\, b_j \rangle,
		\]
		\begin{equation*}
			g_m(x,0) = \mathrm{div}\, u_{0,m} (x),
		\end{equation*}
		for all $0\leq j \leq m$ with the initial date $u_{0,m}$ from 
		the linear span ${\mathcal L} (\{b_j\}_{j=1}^m)$ such that the sequence 
		$\{u_{0,m}\}$ converges to $u_0$ in $V_{\Lambda^{2}}^1$. 
		For instance, as $\{u_{0,m}\}$ we may take  the orthogonal projection 
		onto the linear span ${\mathcal L} (\{b_j\}_{j=1}^m)$.
		
		Multiplying \eqref{eq_edin_1} by $ c_j^{(m)}(t) $ and summing by $ j $ we have
		\begin{equation} \label{eq_edin_2}
			 (\partial_t g_m,g_m)_{L_{\Lambda^{3}}^2}
			+ \mu (\nabla g_m, \nabla g_m)
			_{{L}_{{\Lambda^{2}}}^2}
			=
			\langle \mathrm{div}\,f - g_m^2 -\nabla g_m\cdot\varphi^2\,\nabla g_m - \nabla g_m\cdot \Pi^2 u_m, g_m \rangle.
		\end{equation}
It follows from Lemma by J.-L. Lions (see, for instance, \cite[Ch.~III, \S~1, Lemma~1.2]{Temam79}) that
\begin{equation*}
	%\label{eq.dt}
	\frac{d}{dt} \| g_m (\cdot, t) \|^2_{L_{\Lambda^{3}}^2}
	= 2\, \langle \partial_t g_m, g_m \rangle.
\end{equation*}
	Then integrating by  $t \in [0,T]$ we see that
		\begin{equation} \label{eq_edin_3}
			\| g_m(\cdot, t)\|^2_{{L}_{\Lambda^{3}}^2}
			+ 2\mu\int_0^t \|\nabla g_m\|^2_{{L}_{{\Lambda^{2}}}^2}dt
			=
				\end{equation}
			\[
\|g_m(\cdot, 0 )\|^2_{{L}_{\Lambda^{3}}^2} + 	2\int_0^t \langle \mathrm{div}\,f - g_m^2 -\nabla g_m\cdot\varphi^2\,\nabla g_m - \nabla g_m\cdot \Pi^2 u_m, g_m \rangle dt.
	\]
		
		Since $ f\in L^2 (I,L^2_{\Lambda^{2}}) $ then $ \mathrm{div}\,f\in L^2 (I,(V^1_{\Lambda^{3}})') $ and
		\begin{equation}\label{eq_edin_4}
		2	\left| \int_0^t \langle \mathrm{div}\,f, g_m \rangle dt \right| \leq  2\int_0^t \| \mathrm{div}\,f\|_{(V^1_{\Lambda^{3}})'} \| g_m \|_{V^1_{\Lambda^{3}}} dt \leq
		\end{equation}
		\[
		\frac{4}{\mu}\int_0^t\|\mathrm{div}\,f\|^2_{(V^1_{\Lambda^{3}})'}dt + \frac{\mu}{4} \int_0^t \| \nabla g_m \|^2_{L^2_{\Lambda^{2}}} dt + \frac{\mu}{4} \int_0^t \| g_m \|^2_{L^2_{\Lambda^{3}}} dt.
		\]
		
		On the other hand
		\begin{equation}\label{eq_edin_5}
			2	\left| \int_0^t \langle g_m^2, g_m \rangle dt\right| \leq 
			2	\int_0^t \|g_m\|^3_{L^3_{\Lambda^{3}}}dt.
		\end{equation}
Note that in our case $ \nabla_3 = -\nabla$ with $ n=3 $. Then from the Gagliardo-Nirenberg inequality (see \cite{Nir59} or \cite[Theorem 3.70]{Aubin82}) %with $ j_0 = 0 $, $ p_0 = 3 $, $ m_0 = 1 $, $ r_0 = p_0 = 2 $ and $ a=\frac{1}{2} $ 
we have
	\begin{equation}\label{eq_edin_7}
		2\int_0^t \|g_m\|^3_{L^3_{\Lambda^{3}}}dt \leq 
	\end{equation}
\[
c\int_0^t \left[ \Big( \|\nabla g_m\|_{L^2_{\Lambda^{2}}} + \| g_m\|_{L^2_{\Lambda^{3}}}\Big)^{\frac{1}{2}}\| g_m\|^{\frac{1}{2}}_{L^2_{\Lambda^{3}}} + \| g_m\|_{L^2_{\Lambda^{3}}} \right]^3dt \leq
	\]
	\[
 c_1\int_0^t \left[ \|\nabla g_m\|^{\frac{1}{2}}_{L^2_{\Lambda^{2}}}\| g_m\|^{\frac{1}{2}}_{L^2_{\Lambda^{3}}} + \| g_m\|_{L^2_{\Lambda^{3}}} \right]^3 dt \leq
	\]
	\[
	c_2\int_0^t \left(  \|\nabla g_m\|^{\frac{3}{2}}_{L^2_{\Lambda^{2}}}\| g_m\|^{\frac{3}{2}}_{L^2_{\Lambda^{3}}} + \| g_m\|^{3}_{L^2_{\Lambda^{3}}} \right) dt\leq   
	\]
	\[
	\frac{\mu}{2}\int_0^t  \|\nabla g_m\|^{2}_{L^2_{\Lambda^{2}}}dt + c_3\int_0^t \left( \|g_m\|^{3}_{L^2_{\Lambda^{3}}} + \|g_m\|^{6}_{L^2_{\Lambda^{3}}}\right) dt
	\]
	with positive constants $ c $,  $ c_1 $ and $ c_2 $. The last expression is consequence of standard Young's inequality. Moreover, %it follows from the H\"older inequality that 
	there are positive constants $ c $ and $ c_1 $ such that
	\[
	\int_0^t \left( \|g_m\|^{3}_{L^2_{\Lambda^{3}}} + \|g_m\|^{6}_{L^2_{\Lambda^{3}}}\right) dt\leq 
	c\int_0^t \|g_m\|^{2}_{L^2_{\Lambda^{3}}}\left( 1 + \|g_m\|_{L^2_{\Lambda^{3}}}\right)^4 dt \leq
	\]
	\[
	 c_1 \left(\int_0^t \|g_m\|^{2}_{L^2_{\Lambda^{3}}}dt + \int_0^t \|g_m\|^6_{L^2_{\Lambda^{3}}} dt\right).
	\]
	Then we conclude that 
		\begin{equation}\label{eq_edin_12}
		2\int_0^t \|g_m\|^3_{L^3_{\Lambda^{3}}}dt \leq \frac{\mu}{2}\int_0^t  \|\nabla g_m\|^{2}_{L^2_{\Lambda^{2}}}dt + c \int_0^t \|g_m\|^{2}_{L^2_{\Lambda^{3}}}dt + c\int_0^t \|g_m\|^6_{L^2_{\Lambda^{3}}} dt
	\end{equation}
	with some constant $ c>0 $.
	Next,
		\[
			 \int_0^t \langle \nabla g_m\cdot\varphi^2\,\nabla g_m, g_m \rangle dt = \sum_{j=1}^3 \int_0^t \int\limits_X \partial_j g_m (\varphi^2 \partial_j g_m) g_m\,dx\,dt = 
		\]
\[
	 - \sum_{j=1}^3 \int_0^t \int\limits_X g_m (\varphi^2 \partial_j g_m) \partial_j g_m\,dx\,dt -   \int_0^t \int\limits_X g_m^3\,dx\,dt,
\]
	because $ \varphi^2\Delta g_m = g_m $. It means that
\[
	 \int_0^t \langle \nabla g_m\cdot\varphi^2\,\nabla g_m, g_m \rangle dt = -\frac{1}{2} \int_0^t \int\limits_X g_m^3\,dx\,dt,
\]
	and hence
\begin{equation}\label{eq_edin_8}
	2\left| \int_0^t \langle \nabla g_m\cdot\varphi^2\,\nabla g_m, g_m \rangle dt\right| \leq 	\int_0^t \|g_m\|^3_{L^3_{\Lambda^{3}}}dt.
\end{equation}
	
	Finally,
	\[
		\int_0^t \langle \nabla g_m \cdot \Pi^2 u_m, g_m \rangle dt = \sum_{j=1}^3 \int_0^t \int\limits_X \partial_j g_m (\Pi^2 u_m^j) g_m\,dx\,dt = 
\]
	\[
	 - \sum_{j=1}^3 \int_0^t \int\limits_X  g_m (\Pi^2 u_m^j) \partial_j g_m\,dx\,dt - \sum_{j=1}^3 \int_0^t \int\limits_X  g^2_m \partial_j(\Pi^2 u_m^j) \,dx\,dt,
	\]
and then
\begin{equation}\label{eq_edin_9}
	\int_0^t \langle \nabla g_m \cdot \Pi^2 u_m, g_m \rangle dt = 0,
	\end{equation}
	because $ \mathrm{div}\, \Pi^2 u_m = 0 $.
	
	Now, inequalities \eqref{eq_edin_3} - \eqref{eq_edin_9} give
	\begin{equation}\label{eq_edin_10}
		\| g_m(\cdot, t)\|^2_{L^2_{\Lambda^{3}}} + 2\mu\int_0^t \|\nabla g_m\|^2_{L^2_{\Lambda^{3}}}dt \leq \|g_m(\cdot, 0 )\|^2_{L^2_{\Lambda^{3}}} + 
	\end{equation}
		\[
			\frac{4}{\mu}\int_0^t\|\mathrm{div}\,f\|^2_{(V^1_{\Lambda^{3}})'}dt + \mu \int_0^t \| \nabla g_m \|^2_{L^2 _{\Lambda^{2}}} + \frac{\mu}{4} \int_0^t \| g_m \|^2_{L^2 _{\Lambda^{3}}} dt + %\mu\int_0^t  \|\nabla g_m\|^{2}_{L^2(X)}dt +
		\]
\[
 2  c \int_0^t \|g_m\|^{2}_{L^2_{\Lambda^{3}}}dt + 2 c\int_0^t \|g_m\|^6_{L^2_{\Lambda^{3}}} dt,
\]
and then 
	\begin{equation}\label{eq_edin_11}
	\| g_m(\cdot, t)\|^2_{L^2_{\Lambda^{3}}} + \mu\int_0^t \|\nabla g_m\|^2_{L^2_{\Lambda^{3}}}dt\leq 
\|g_m(\cdot, 0 )\|^2_{{L}^2_{\Lambda^{3}}} +  
\end{equation}
\[
\frac{4}{\mu}\|\mathrm{div}\,f\|^2_{L^2\left(I,{( V^1_{\Lambda^{3}})'}\right) } + \left( \frac{\mu}{4} + 2c\right)  \int_0^t \| g_m \|^2_{L^2_{\Lambda^{3}}} dt + 2 c\int_0^t \|g_m\|^6_{L^2_{\Lambda^{3}}} dt.
\]

It follows from the Gronwall-Perov's Lemma (see, for instance \cite[p.~360]{MPF91}) that there exist a time $ t_0\in (0,T] $ and a positive constant $ C_{t_0} $ such that
	\begin{equation}\label{eq_edin_13}
	\| g_m(\cdot, t)\|^2_{L^2_{\Lambda^{3}}} \leq C_{t_0}
\end{equation}
for all $ t\in[0,t_0] $. Then the sequence $ g_m $ is bounded in $ L^\infty(I_{t_0},L^2_{\Lambda^{3}}) $, where $ I_{t_0} = [0,t_0] $. Moreover it follows from \eqref{eq_edin_11} and \eqref{eq_edin_13} that 
$ \| \nabla g_m(\cdot, t)\|^2_{L^2(I_{t_0},L^2_{\Lambda^{3}})} $ is bounded too. 
It means that there is a subsequence that converges weakly-$ * $ in $ L^\infty(I_{t_0},L^2_{\Lambda^{3}}) $ and weakly in $ L^2(I_{t_0},H^1_{\Lambda^{3}}) $ to some $ g\in L^\infty(I_{t_0},L^2_{\Lambda^{3}}) \cap L^2(I_{t_0},H^1_{\Lambda^{3}}) $. We use the same designation $ g_m $ for such a subsequence. Then the standard argument show (see, for instance, \cite{LiMa72}, \cite{Temam79} or \cite{Ladyz}) that we can to pass to the limit in \eqref{eq_edin_1} 
with respect to $ m\to \infty $ and to conclude that the element $ g $ satisfies \eqref{eq.weak_g}.

	\end{proof}
	
	Let us now return to the Problem \eqref{eq34}. Denoting again $ g = \mathrm{div}\,u $ and multiplying \eqref{eq34} scalar in $ L^2_{\Lambda^{2}} $ by differential form $ v\in V^k_{\Lambda^{2}} $ we get
	\begin{equation}
		\label{eq.NS.lin.weak_u}
		\left\{
		\begin{array}{rcl}
			\displaystyle
			\frac{d}{dt} (u,v)_{{L}_{{\Lambda^{2}}}^2}
			+ \mu (g, \mathrm{div}\, v)
			_{{L}_{{\Lambda^{3}}}^2}
			& =
			& \langle f -  g u, v \rangle,
			\\
		u(x,0) 
			& =
			& u_0.
		\end{array}
		\right.
	\end{equation}
	
	\begin{theorem}
		\label{t.exist.first}
		Let $g \in C (I,L^2_{\Lambda^{3}}) \cap L^2 (I,H_{\Lambda^{3}}^1)$ be the solution of \eqref{eq.weak_g} for given any pair $(f,u_0) \in L^2 (I,V^0_{\Lambda^{2}}) \times V^1_{\Lambda^{2}}$. Then there exist a unique differential form
		$u \in C (I,V_{\Lambda^{2}}^1) \cap L^2 (I,V_{\Lambda^{2}}^2)$ %with
		%$\partial_t g \in L^2 (I,(V_{\Lambda^{3}}^1)')$,
		satisfying \eqref{eq.NS.lin.weak_u}
		for all $v \in V_{\Lambda^{3}}^k$ with $ k\geq 2 $.
	\end{theorem}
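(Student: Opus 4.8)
Since $g$ is already fixed, \eqref{eq.NS.lin.weak_u} is a \emph{linear} problem for $u$, and the strategy is to solve it by means of the Hodge decomposition of $V^{s}_{\Lambda^{2}}$ furnished by Theorem~\ref{parametrix.laplas} (see also Lemma~\ref{proector}). Any $u(t)\in V^{1}_{\Lambda^{2}}$ obeys $d_{1}^{*}u=0$, so, using $\Delta_{3}=d_{2}d_{2}^{*}$ on forms of degree~$3$,
\[
u(t)=\Pi^{2}u(t)+d_{2}^{*}\varphi^{3}\big(\mathrm{div}\,u(t)\big);
\]
thus the component of $u$ in $\mathrm{im}\,d_{2}^{*}$ is completely slaved to $\mathrm{div}\,u$, while the remaining component lives in the \emph{finite-dimensional} space $\mathcal{H}^{2}$. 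The plan is to build the solution as $u=u^{\perp}+h$, where $u^{\perp}:=d_{2}^{*}\varphi^{3}g$ is the part forced by $g$ and $h(t)\in\mathcal{H}^{2}$ is obtained from a finite linear ODE system.

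First I would note that $\Pi^{3}g=0$: in the proof of Theorem~\ref{t.weak.g} the form $g$ is the limit of $g_{m}=\mathrm{div}\,u_{m}$, and $\int_{X}\mathrm{div}\,u_{m}=0$ on the closed manifold $X$, so $\Pi^{3}g=0$ a.e.\ in~$I$. By the mapping properties of $\varphi^{3}$ in Theorem~\ref{parametrix.laplas} together with $g\in C(I,L^{2}_{\Lambda^{3}})\cap L^{2}(I,H^{1}_{\Lambda^{3}})$, $\partial_{t}g\in L^{2}(I,(H^{1}_{\Lambda^{3}})')$, one gets $u^{\perp}\in C(I,V^{1}_{\Lambda^{2}})\cap L^{2}(I,V^{2}_{\Lambda^{2}})$, $\partial_{t}u^{\perp}\in L^{2}(I,L^{2}_{\Lambda^{2}})$, with $d_{1}^{*}u^{\perp}=(d_{2}d_{1})^{*}\varphi^{3}g=0$ and $\mathrm{div}\,u^{\perp}=\Delta_{3}\varphi^{3}g=(I-\Pi^{3})g=g$. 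Substituting $u=u^{\perp}+h$ into \eqref{eq.NS.lin.weak_u} and testing against $v=h_{k}$, where $\{h_{1},\dots,h_{N}\}$ is an $L^{2}$-orthonormal basis of $\mathcal{H}^{2}$, the terms $\mu(g,\mathrm{div}\,h_{k})$ and $(u^{\perp},h_{k})_{L^{2}}$ vanish (because $\mathrm{div}\,h_{k}=0$ and $\mathrm{im}\,d_{2}^{*}\perp\mathcal{H}^{2}$), and with $h=\sum a_{i}h_{i}$ the equation becomes
\[
\dot a_{k}=(f,h_{k})_{L^{2}}-\sum_{i}\big(g\,h_{i},h_{k}\big)_{L^{2}}a_{i}-\big(g\,u^{\perp},h_{k}\big)_{L^{2}},\qquad a_{k}(0)=(u_{0},h_{k})_{L^{2}}.
\]
As $g\in C(I,L^{2})$, $f\in L^{2}(I,V^{0}_{\Lambda^{2}})$ and $u^{\perp}\in C(I,L^{2})$, the coefficients belong to $L^{1}(I)$, so this system has a unique global solution and $h\in C^{1}(I,\mathcal{H}^{2})$. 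Using $g(\cdot,0)=\mathrm{div}\,u_{0}$ and the Hodge decomposition of $u_{0}\in V^{1}_{\Lambda^{2}}$ one checks $u(0)=d_{2}^{*}\varphi^{3}(\mathrm{div}\,u_{0})+\Pi^{2}u_{0}=u_{0}$; moreover $u=u^{\perp}+h\in C(I,V^{1}_{\Lambda^{2}})\cap L^{2}(I,V^{2}_{\Lambda^{2}})$, the $H^{2}$-regularity being inherited from $g\in L^{2}(I,H^{1})$ through $\varphi^{3}$.

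It remains to verify \eqref{eq.NS.lin.weak_u} against the complementary test functions $v=d_{2}^{*}\varphi^{3}q$, $q=\mathrm{div}\,v$. Here I would rewrite $(u,d_{2}^{*}\varphi^{3}q)_{L^{2}}=(\varphi^{3}g,q)_{L^{2}}$, $\mu(g,\mathrm{div}\,d_{2}^{*}\varphi^{3}q)=\mu(g,q)_{L^{2}}$ (using $\Delta_{3}\varphi^{3}=I-\Pi^{3}$ and $\Pi^{3}g=0$), and integrate by parts $\langle g u,d_{2}^{*}\varphi^{3}q\rangle=\int_{X}\mathrm{div}(gu)\,\varphi^{3}q$, using $\mathrm{div}\,u=\mathrm{div}\,u^{\perp}=g$; applying $\Delta_{3}$ and again $\Delta_{3}\varphi^{3}=I-\Pi^{3}$, $\Pi^{3}(\mathrm{div}\,f)=0$, the tested identity turns into precisely the weak equation \eqref{eq.weak_g} for $g$ (with the role of $\Pi^{2}u$ in \eqref{eq38} played by our $h$), which holds by hypothesis; hence $u=u^{\perp}+h$ solves \eqref{eq.NS.lin.weak_u} for all $v\in V^{k}_{\Lambda^{2}}$, $k\ge2$. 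For uniqueness, if $u_{1},u_{2}$ are two solutions then $e=u_{1}-u_{2}\in C(I,V^{1}_{\Lambda^{2}})\cap L^{2}(I,V^{2}_{\Lambda^{2}})$ satisfies the homogeneous problem $\frac{d}{dt}(e,v)_{L^{2}}+(g e,v)_{L^{2}}=0$, $e(0)=0$ (the $\mu(g,\mathrm{div}\,v)$-term is the same for both); decomposing $e=d_{2}^{*}\varphi^{3}(\mathrm{div}\,e)+\Pi^{2}e$ and testing against $d_{2}^{*}\varphi^{3}(\mathrm{div}\,e)$ and against $\Pi^{2}e$ gives a closed homogeneous linear system for $(\mathrm{div}\,e,\Pi^{2}e)$, and the corresponding energy estimate — in which the a~priori bound $\mathrm{div}\,e\in L^{2}(I,H^{1}_{\Lambda^{3}})$ and the elliptic estimates for $\varphi^{3}$ are used to absorb the cubic terms via Gagliardo--Nirenberg inequalities, as in the proof of Theorem~\ref{t.weak.g} — together with $e(0)=0$ forces $e\equiv0$.

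The hard part is precisely the last two points (verification against the remaining test functions, and uniqueness): since \eqref{eq.NS.lin.weak_u} has no dissipative term acting on $u$, neither the $L^{2}(I,V^{2}_{\Lambda^{2}})$-regularity nor uniqueness can be reached by a straightforward Galerkin/energy scheme for $u$ itself; both must be extracted from the elliptic operator $\varphi^{3}$ applied to $g$ and from the finite-dimensional ODE governing the harmonic part, and one has to make sure that the harmonic component $h$ built above is the one already implicitly present in the term $\Pi^{2}u$ of the scalar equation \eqref{eq.weak_g} satisfied by $g$.
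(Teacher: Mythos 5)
Your route is genuinely different from the paper's. The paper does not decouple $u$ into a coexact and a harmonic part at all: it takes the \emph{same} Faedo--Galerkin sequence $\{u_m\}$ from \eqref{eq36} whose divergences $g_m=\mathrm{div}\,u_m$ were used to build $g$ in Theorem~\ref{t.weak.g}, substitutes $u_m$ into \eqref{eq.NS.lin.weak_u}, derives the a~priori bound $\|u_m(\cdot,t)\|^2_{L^2_{\Lambda^2}}\leq C$ via H\"older, Gagliardo--Nirenberg and Gronwall, extracts a weak-$*$ limit, and argues that the convergence $g_m\to g$ upgrades the limit to $C(I,V^1_{\Lambda^2})\cap L^2(I,V^2_{\Lambda^2})$; uniqueness is then a separate Gronwall energy argument for the difference of two solutions. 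Your reduction to the explicit formula $u^{\perp}=d_2^*\varphi^3 g$ plus a finite-dimensional linear ODE on $\mathcal{H}^2$ is more structural and, where it works, gives existence, regularity and uniqueness of the harmonic component essentially for free from Carath\'eodory theory rather than from compactness; the computations for $u^\perp$ (regularity, $\mathrm{div}\,u^\perp=(I-\Pi^3)g=g$, orthogonality to $\mathcal{H}^2$, recovery of the initial datum) are correct.

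The genuine gap is the one you yourself flag at the end, and it is not cosmetic. When you test \eqref{eq.NS.lin.weak_u} against $v=d_2^*\varphi^3 q$, the identity you need to verify is \eqref{eq.weak_g} with test form $\varphi^3 q$ \emph{and with the term $\Pi^2 u$ in \eqref{eq38} replaced by your $h$}. The hypothesis only provides \eqref{eq.weak_g} with some a~priori given harmonic component $\Pi^2 u$ (in the paper, the one carried along by the Galerkin scheme of Theorem~\ref{t.weak.g}). The discrepancy between the two identities is $\langle \nabla g\cdot(\Pi^2 u-h),\varphi^3 q\rangle$, and since $\varphi^3$ maps onto the orthogonal complement of $\mathcal{H}^3$, this vanishes for all $q$ only if $\nabla g\cdot(\Pi^2 u-h)$ is a.e.\ harmonic of degree $3$, i.e.\ constant --- which there is no reason to expect unless $h$ is shown to coincide with the harmonic part already present in the equation for $g$. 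Your ODE for $h$ is derived from the equation you are trying to prove, so it cannot by itself supply that identification; you would need either to re-run the argument with the coupled system $(g,h)$ (which is essentially what the paper's simultaneous Galerkin construction does implicitly), or to assume $\mathcal{H}^2=0$, which fails on a general compact $X$. Until that identification is made, the verification against coexact test forms, and hence the existence claim, does not close.
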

	
	\begin{proof}
	Indeed, let $\{u_m\}$ be the sequence of Faedo-Galerkin approximations (see \eqref{eq36}) such that the sequence $\{g_m\} = \{\mathrm{div}\,u_m\}$ converges to $ g\in C (I,L^2_{\Lambda^{3}}) \cap L^2 (I,H_{\Lambda^{3}}^1) $. Substituting $ u_m $ to \eqref{eq.NS.lin.weak_u} instead of $ u $, $ v $ and integrating by $ t\in [0, t_0]$ we have
	\begin{equation}\label{eq40}
	 \|u_m(\cdot,t)\|^2_{{L}_{{\Lambda^{2}}}^2}
		+ 2 \mu \|g_m\|^2_{L^2(I_{t_0},L^2_{\Lambda^{3}})} = \|u_m(x,0)\|^2_{{L}_{{\Lambda^{2}}}^2} + 2\int_0^{t_0}\langle f -  g_m u_m, u_m \rangle dt.
	\end{equation}
	As usual, we evaluate by H\"older inequality
	\[
	2\left| \int_0^{t_0}\langle f, u_m \rangle dt\right| \leq \int_0^{t_0}\|u_m\|^2_{{L}_{{\Lambda^{2}}}^2}dt+ \int_0^{t_0}\|f\|^2_{{L}_{{\Lambda^{2}}}^2}dt
	\]
	and by Gagliardo-Nirenberg inequality
	\[
	2\left| \int_0^{t_0}\langle g_m u_m, u_m \rangle dt\right| \leq \int_0^{t_0}\|u_m\|^2_{{L}_{{\Lambda^{2}}}^4}\|g_m\|_{{L}_{{\Lambda^{3}}}^2}dt\leq
	\]
	\[
	c\int_0^{t_0} \|g_m\|_{{L}_{{\Lambda^{3}}}^2} \left(  \|g_m\|^{3/2}_{{L}_{{\Lambda^{3}}}^2} \|u_m\|^{1/2}_{{L}_{{\Lambda^{2}}}^2} + \|u_m\|^2_{{L}_{{\Lambda^{2}}}^2}\right) dt\leq
	\]
		\[
		2\mu\int_0^{t_0} \|g_m\|^{2}_{{L}_{{\Lambda^{3}}}^2}dt + 
		c_1\int_0^{t_0} \|g_m\|^{4}_{{L}_{{\Lambda^{3}}}^2} \|u_m\|^{2}_{{L}_{{\Lambda^{2}}}^2} dt +
	c_2 \int_0^{t_0} \|g_m\|_{{L}_{{\Lambda^{3}}}^2} \|u_m\|^2_{{L}_{{\Lambda^{2}}}^2} dt
	\]
	with some positive constants $ c $, $ c_1 $ and $ c_2 $. Then 
	\begin{equation}\label{eq41}
		\|u_m(\cdot,t)\|^2_{{L}_{{\Lambda^{2}}}^2} \leq \|u_0\|^2_{{L}_{{\Lambda^{2}}}^2} +  \int_0^{t_0}\|f\|^2_{{L}_{{\Lambda^{2}}}^2}dt+
		c\int_0^{t_0}\|u_m\|^2_{{L}_{{\Lambda^{2}}}^2}dt
	\end{equation}
	with constant $ c>0 $, independent on $ m $. It follows from Gronwall's Lemma that
	\begin{equation}\label{eq42}
		\|u_m(\cdot,t)\|^2_{{L}_{{\Lambda^{2}}}^2} \leq C,
	\end{equation}
	where constant $ C $ depends on norms $ \|f\|^2_{L^2(I_{t_0}, L^2_{\Lambda^{2}})} $, $ \|u_0\|^2_{{L}_{{\Lambda^{2}}}^2} $ and $ \|g\|_{C(I_{t_0}, L^2_{\Lambda^{2}})} $, but not on $ m $. 
	
	It follows that the sequence $ u_m $ is bounded in $ L^\infty(I_{t_0},L^2_{\Lambda^{3}}) $ and  there is a subsequence that converges weakly-$ * $ in $ L^\infty(I_{t_0},L^2_{\Lambda^{3}}) $ to some $ u\in L^\infty(I_{t_0},L^2_{\Lambda^{3}}) $.  We again use the same designation $ u_m $ for such a subsequence. Under hypothesis of this Theorem the sequence $ g_m = \mathrm{div}\, u_m $ converges to $ g\in C (I,L^2_{\Lambda^{3}}) \cap L^2 (I,H_{\Lambda^{3}}^1) $, then actually $ u\in C (I,V^1_{\Lambda^{3}}) \cap L^2 (I,V_{\Lambda^{3}}^2)$. Passing to the limit in \eqref{eq40} 
	with respect to $ m\to \infty $ we conclude that the element $ u $ satisfies \eqref{eq.NS.lin.weak_u}.
	
Let now $ u' $ and $ u'' $ are two solutions of \eqref{eq.NS.lin.weak_u} such that $ \mathrm{div}\, u' = \mathrm{div}\, u'' =g $. Hence differential form $ u = u'-u'' $ satisfies \eqref{eq.NS.lin.weak_u} with zero date $ (f, u_0) = (0,0)$. It follows from \eqref{eq41} and Gronwall-Perov's Lemma that $ \|g(\cdot, t)\|_{L^2_{\Lambda^{3}}} = 0 $, then the Problem \eqref{eq.NS.lin.weak_u} has unique solution.

Moreover, if $ u_1 $, $ u_2 $ are two solutions of \eqref{eq.NS.lin.weak_u}, corresponding to the solutions $ g_1 = \mathrm{div}\, u_1 $ and $ g_2 = \mathrm{div}\, u_2 $ of \eqref{eq.weak_g}, then differential form $ u = u_1-u_2 $ satisfies 
	\begin{equation}
	\label{eq.NS.lin.weak_u_e}
	\left\{
	\begin{array}{rcl}
		\displaystyle
		\frac{d}{dt} (u,v)_{{L}_{{\Lambda^{2}}}^2}
		+ \mu (g, \mathrm{div}\, v)
		_{{L}_{{\Lambda^{3}}}^2}
		& =
		& \langle -  g u, v \rangle,
		\\
		u(x,0) 
		& =
		& 0,
	\end{array}
	\right.
\end{equation}
where $ g=g_1-g_2 $.
\begin{equation}\label{eq48}
	\|u(\cdot,t)\|^2_{{L}_{{\Lambda^{2}}}^2}
	+ 2 \mu \|g\|^2_{L^2(I_{t_0},L^2_{\Lambda^{3}})} = -2\int_0^{t_0}\langle   g u, u \rangle dt.
\end{equation}
Applying the Gagliardo-Nirenberg inequality we have
\[
2\left|\int_0^{t_0}\langle   g u, u \rangle dt \right| \leq 
c_1\int_0^{t_0}\|g\|_{L^2_{\Lambda^{3}}}\left( \|\nabla u\|^{3/4}_{{L}_{{\Lambda^{2}}}^2} \|u\|^{1/4}_{{L}_{{\Lambda^{2}}}^2} + \|u\|_{{L}_{{\Lambda^{2}}}^2} \right)^2dt\leq
\]
\[ 
c_2\int_0^{t_0}\left( \| g\|^{5/2}_{{L}_{{\Lambda^{2}}}^2} \|u\|^{1/2}_{{L}_{{\Lambda^{2}}}^2} + \|u\|^2_{{L}_{{\Lambda^{2}}}^2} \|g\|_{L^2_{\Lambda^{3}}}\right)dt\leq
\]
\[
2\mu \|g\|^2_{L^2(I_{t_0},L^2_{\Lambda^{3}})} +c_3\left( \|g\|^4_{C(I_{t_0},L^2_{\Lambda^{3}})} + \|g\|_{C(I_{t_0},L^2_{\Lambda^{3}})} \right) \int_0^{t_0} \|u\|_{{L}_{{\Lambda^{2}}}^2} dt
\]
with positive constants $ c_1 $, $ c_2 $ and $ c_3 $. The last inequality, 
\eqref{eq48} and Gronwall-Perov's Lemma yields
\[
\|u(\cdot,t)\|^2_{{L}_{{\Lambda^{2}}}^2} \leq 0,
\]
then $ u_1 = u_2 $ and the Problem \eqref{eq.NS.lin.weak_u} has unique solution.
	
\end{proof}

\begin{corollary}\label{corol}
	Under hypothesis of the Theorem \ref{t.weak.g}, let $ g\in C (I,L^2_{\Lambda^{3}}) \cap L^2 (I,H_{\Lambda^{3}}^1) $ is a solution of \eqref{eq.weak_g} and $ u \in C (I,V_{\Lambda^{2}}^1) \cap L^2 (I,V_{\Lambda^{2}}^2) $ is a corresponding to $ g $ solution of \eqref{eq.NS.lin.weak_u}. If moreover $g\in C (I,H^1_{\Lambda^{3}}) \cap L^2 (I,H_{\Lambda^{3}}^1)  $, then the solution $ g $ is unique.
\end{corollary}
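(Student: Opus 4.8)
The plan is a weak--strong uniqueness argument: I compare the given (more regular) $g$ with an arbitrary solution and run an $L^{2}$-energy estimate on the difference of the associated velocity fields, the point being that the extra hypothesis $g\in C(I,H^{1}_{\Lambda^{3}})$ forces that velocity to be bounded. Let then $g'\in C(I,L^{2}_{\Lambda^{3}})\cap L^{2}(I,H^{1}_{\Lambda^{3}})$ be another solution of \eqref{eq.weak_g} and let $u'\in C(I,V^{1}_{\Lambda^{2}})\cap L^{2}(I,V^{2}_{\Lambda^{2}})$ be a corresponding solution of \eqref{eq.NS.lin.weak_u}. By Theorem \ref{t.exist.first} one has $\mathrm{div}\,u=g$, $\mathrm{div}\,u'=g'$, while $\mathrm{rot}\,u=\mathrm{rot}\,u'=0$ because $u,u'$ lie in the spaces $V^{1}_{\Lambda^{2}}$. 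Consequently $\Delta_{2}u=-\nabla\mathrm{div}\,u=-\nabla g\in C(I,L^{2}_{\Lambda^{2}})$, so Theorem \ref{parametrix.laplas} yields $u\in C(I,H^{2}_{\Lambda^{2}})$ and hence, by the Sobolev embedding $H^{2}\hookrightarrow L^{\infty}$ valid on the $3$-dimensional $X$, $u\in C(I,L^{\infty}_{\Lambda^{2}})$; moreover $\|u'\|^{2}_{H^{1}_{\Lambda^{2}}}=\|u'\|^{2}_{L^{2}_{\Lambda^{2}}}+\|g'\|^{2}_{L^{2}_{\Lambda^{3}}}$, so $\|u'\|_{C(I,H^{1}_{\Lambda^{2}})}<\infty$.

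Set $w:=u-u'$ and $h:=g-g'=\mathrm{div}\,w$, so $\mathrm{rot}\,w=0$, $w(\cdot,0)=0$ and $\|\nabla w\|^{2}_{L^{2}_{\Lambda^{2}}}=\|h\|^{2}_{L^{2}_{\Lambda^{3}}}$, $\|w\|^{2}_{H^{1}_{\Lambda^{2}}}=\|w\|^{2}_{L^{2}_{\Lambda^{2}}}+\|h\|^{2}_{L^{2}_{\Lambda^{3}}}$. Subtracting the two copies of \eqref{eq.NS.lin.weak_u} gives, for every $v\in V^{k}_{\Lambda^{2}}$ with $k\ge2$ and hence, by density, for every $v\in V^{1}_{\Lambda^{2}}$,
\[
\frac{d}{dt}(w,v)_{L^{2}_{\Lambda^{2}}}+\mu\,(h,\mathrm{div}\,v)_{L^{2}_{\Lambda^{3}}}=-\langle g\,w+h\,u',\,v\rangle .
\]
Choosing $v=w$, invoking the lemma of J.-L. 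Lions exactly as in the proof of Theorem \ref{t.weak.g} and using $\mathrm{div}\,w=h$, I obtain for $t\in[0,t_{0}]$
\[
\tfrac12\,\|w(\cdot,t)\|^{2}_{L^{2}_{\Lambda^{2}}}+\mu\int_{0}^{t}\|h\|^{2}_{L^{2}_{\Lambda^{3}}}\,d\tau
=-\int_{0}^{t}\langle g\,w,w\rangle\,d\tau-\int_{0}^{t}\langle h\,u',w\rangle\,d\tau .
\]
It will then suffice to dominate the right-hand side by $\mu\int_{0}^{t}\|h\|^{2}_{L^{2}_{\Lambda^{3}}}\,d\tau+C\int_{0}^{t}\|w\|^{2}_{L^{2}_{\Lambda^{2}}}\,d\tau$: the dissipation absorbs the $h$-part, and Gronwall's lemma applied to $t\mapsto\|w(\cdot,t)\|^{2}_{L^{2}_{\Lambda^{2}}}$ (which vanishes at $t=0$) forces $w\equiv0$ on $I_{t_{0}}$, whence $h=\mathrm{div}\,w\equiv0$, that is $g=g'$.

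For the first integrand I would write $g=\mathrm{div}\,u$ and integrate by parts, $\langle g\,w,w\rangle=\int_{X}(\mathrm{div}\,u)\,|w|^{2}\,dx=-\int_{X}u\cdot\nabla(|w|^{2})\,dx$, so that $|\langle g\,w,w\rangle|\lesssim\|u\|_{L^{\infty}_{\Lambda^{2}}}\,\|w\|_{L^{2}_{\Lambda^{2}}}\,\|\nabla w\|_{L^{2}_{\Lambda^{2}}}=\|u\|_{L^{\infty}_{\Lambda^{2}}}\,\|w\|_{L^{2}_{\Lambda^{2}}}\,\|h\|_{L^{2}_{\Lambda^{3}}}$; this is the step that uses $g\in C(I,H^{1}_{\Lambda^{3}})$. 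For the second integrand I would combine Hölder's inequality with exponents $(2,6,3)$, the embedding $H^{1}\hookrightarrow L^{6}$, the Gagliardo--Nirenberg inequality $\|w\|_{L^{3}}\lesssim\|w\|^{1/2}_{H^{1}_{\Lambda^{2}}}\|w\|^{1/2}_{L^{2}_{\Lambda^{2}}}$ and the bound $\|w\|_{H^{1}_{\Lambda^{2}}}\le\|w\|_{L^{2}_{\Lambda^{2}}}+\|h\|_{L^{2}_{\Lambda^{3}}}$ recorded above, to get $|\langle h\,u',w\rangle|\lesssim\|u'\|_{H^{1}_{\Lambda^{2}}}\bigl(\|h\|^{3/2}_{L^{2}_{\Lambda^{3}}}\|w\|^{1/2}_{L^{2}_{\Lambda^{2}}}+\|h\|_{L^{2}_{\Lambda^{3}}}\|w\|_{L^{2}_{\Lambda^{2}}}\bigr)$. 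Since $\|u\|_{C(I,L^{\infty})}$ and $\|u'\|_{C(I,H^{1})}$ are finite, Young's inequality applied to each product on the right of these two estimates supplies precisely the desired bound (with $C$ depending on $\mu$, $\|u\|_{C(I,L^{\infty})}$ and $\|u'\|_{C(I,H^{1})}$ only), and the proof concludes as indicated.

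The essential difficulty is the estimate of these trilinear terms, most of all $\langle h\,u',w\rangle$: the parabolic part of \eqref{eq.NS.lin.weak_u}, once projected onto $\mathrm{div}\,u$, produces only the damping $\mu\|h\|^{2}_{L^{2}_{\Lambda^{3}}}$ rather than $\mu\|\nabla h\|^{2}_{L^{2}_{\Lambda^{3}}}$, so there is almost no room for absorption, and a crude bound yields a non-absorbable term of the form $\|h\|^{5/2}_{L^{2}_{\Lambda^{3}}}\|w\|^{1/2}_{L^{2}_{\Lambda^{2}}}$. Rewriting $g$ as $\mathrm{div}\,u$ and using the additional regularity to place $u$ in $C(I,L^{\infty}_{\Lambda^{2}})$ is exactly what creates the slack needed; everything else is the routine Hölder/Gagliardo--Nirenberg/Young bookkeeping already present in the proofs of Theorems \ref{t.weak.g} and \ref{t.exist.first}, together with the Hodge estimates of Theorem \ref{parametrix.laplas}.
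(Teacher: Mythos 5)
Your argument is correct, but it runs along a genuinely different track than the paper's. The paper proves the Corollary by an energy estimate at the level of the scalar equation \eqref{eq.weak_g}: it forms $g=g_1-g_2$, uses the dissipation $\mu\|\nabla g\|^2_{L^2}$ in \eqref{eq43}, and estimates the differences of the three nonlinear terms separately in \eqref{eq45}--\eqref{eq47}, which requires the mapping properties of the parametrix $\varphi^2$ and the projection $\Pi^2$ from Theorem \ref{parametrix.laplas}, the identification $u_1=u_2$ imported from Theorem \ref{t.exist.first}, and (as written) $C(I,H^1)$ control of both solutions. You instead run a weak--strong uniqueness estimate at the level of the velocity equation \eqref{eq.NS.lin.weak_u}: setting $w=u-u'$, the dissipation $\mu\|h\|^2_{L^2}=\mu\|\mathrm{div}\,w\|^2_{L^2}$ coincides with the full gradient norm of $w$ because $\mathrm{rot}\,w=0$, the term $\langle g w,w\rangle$ is tamed by upgrading $u$ to $C(I,L^\infty_{\Lambda^2})$ via $\Delta_2 u=-\nabla g\in C(I,L^2)$, Theorem \ref{parametrix.laplas} and $H^2\hookrightarrow L^\infty$ in dimension $3$ (this is exactly where the extra hypothesis $g\in C(I,H^1_{\Lambda^3})$ enters), and the term $\langle h u',w\rangle$ yields the absorbable $\|h\|^{3/2}\|w\|^{1/2}$ after H\"older/Gagliardo--Nirenberg/Young. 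Your route buys several things: it bypasses the $\varphi^2$- and $\Pi^2$-estimates entirely, it does not presuppose the uniqueness statement of Theorem \ref{t.exist.first} (it reproves $u=u'$ and delivers $g=g'$ in one stroke), and it only needs the additional $H^1$ regularity on one of the two solutions, so it is a true weak--strong statement; the price is that uniqueness of $g$ is obtained only within the class of solutions paired with a velocity solving \eqref{eq.NS.lin.weak_u}, but that is precisely the class the Corollary (and the paper's own proof) works in. The only points you should spell out to match the paper's level of rigor are the density of $V^k_{\Lambda^2}$, $k\ge 2$, in $V^1_{\Lambda^2}$ that lets you test with $v=w$, and the bound $\partial_t w\in L^2(I,(V^1_{\Lambda^2})')$ needed for the Lions lemma; both are routine here.
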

\begin{proof}	
Indeed, let $ g_1, g_2 \in C (I,H^1_{\Lambda^{3}}) \cap L^2 (I,H_{\Lambda^{3}}^1)$ are two solutions of \eqref{eq.weak_g} with corresponding forms $ u_1, u_2 \in C (I,H^1_{\Lambda^{3}}) \cap L^2 (I,H_{\Lambda^{3}}^1) $ satisfying \eqref{eq.NS.lin.weak_u}. Hence differential form $ g = g_1-g_2 $ satisfies 
	\begin{equation}
		\label{eq43}
		\left\{
		\begin{array}{rcl}
			\displaystyle
			\frac{d}{dt} \|g\|^2_{{L}_{{\Lambda^{3}}}^2}
			+ \mu \|\nabla g\|^2_{{L}_{{\Lambda^{2}}}^2}
			& =
			& \left\langle  - (g_1^2 - g_2^2) -\left( \nabla g_1\cdot(\varphi^2\,\nabla g_1 + \Pi^2 u_1)-\right.\right.  \\
			&&\left.  \left.  \nabla g_2\cdot(\varphi^2\,\nabla g_2 + \Pi^2 u_2)\right)  , v \right\rangle ,
			\\
			g (\cdot,0)
			& =
			& 0.
		\end{array}
		\right.
	\end{equation}
	Integrating by $ t\in I_{t_0} $ we get
	\begin{equation}\label{eq44}
		\|g(\cdot,t\|^2_{{L}_{{\Lambda^{3}}}^2}
		+ 2\mu \int_0^{t_0}\|\nabla g\|^2_{{L}_{{\Lambda^{2}}}^2}dt \leq 2\int_0^{t_0}\left| \left\langle  g(g_1 + g_2) +\right. \right. 
	\end{equation}
	\[
	\left. \left. \left( \nabla g_1\cdot(\varphi^2\,\nabla g_1 + \Pi^2 u_1)- \nabla g_2\cdot(\varphi^2\,\nabla g_2 + \Pi^2 u_2)\right)  , v \right\rangle \right| dt ,
	\]
	We have to estimate right side of \eqref{eq44}. First, it follows from Gagliardo-Nirenberg interpolation inequality that
	\begin{equation}\label{eq45}
		2\int_0^{t_0}\left| \left\langle  g(g_1 + g_2), g\right\rangle \right|dt \leq 
	\end{equation}
	\[
	c\|(g_1 + g_2)\|_{C(I_{t_0},L^2_{\Lambda^{3}})}\int_0^{t_0} \left(  \|\nabla g\|^{3/2}_{L^2_{\Lambda^{3}}} \| g\|^{1/2}_{L^2_{\Lambda^{3}}} + \| g\|^{2}_{L^2_{\Lambda^{3}}}\right) dt\leq 
	\]
	\[
	\mu\int_0^{t_0} \|\nabla g\|^2_{L^2_{\Lambda^{3}}} dt + c_1 \int_0^{t_0} \| g\|^{2}_{L^2_{\Lambda^{3}}} dt
	\]
	with positive constants $ c $,  $ c_1 $. Next,
	\begin{equation}\label{eq46}
		2\int_0^{t_0}\left| \left\langle  \nabla g_1\cdot\varphi^2\,\nabla g_1 - \nabla g_2\cdot\varphi^2\,\nabla g_2 + \nabla g_1\cdot\varphi^2\,\nabla g_2 - \nabla g_1\cdot\varphi^2\,\nabla g_2, g\right\rangle \right| dt\leq 
	\end{equation}
	\[
	2\int_0^{t_0}\left| \left\langle  \nabla g_1\cdot\varphi^2\,\nabla g, g\right\rangle \right| dt + 2\int_0^{t_0}\left| \left\langle  \nabla g\cdot\varphi^2\,\nabla g_2, g\right\rangle \right| dt\leq
	\]
	\[
	c_1\|g_1\|_{C(I_{t_0},H^1_{\Lambda^{3}})} \int_0^{t_0}
	\left(\|\nabla g\|^{3/4}_{L^2_{\Lambda^{3}}}\|g\|^{5/4}_{L^2_{\Lambda^{3}}} +\|g\|^{2}_{L^2_{\Lambda^{3}}}\right)dt  + 
	\]
	\[
	c_2\|g_1\|_{C(I_{t_0},H^1_{\Lambda^{3}})} \int_0^{t_0}
	\left(\|\nabla g\|^{7/4}_{L^2_{\Lambda^{3}}}\|g\|^{1/4}_{L^2_{\Lambda^{3}}} +\|g\|^{2}_{L^2_{\Lambda^{3}}}\right)dt \leq
	\]
	\[
	\mu\int_0^{t_0} \|\nabla g\|^2_{L^2_{\Lambda^{3}}} dt + c \int_0^{t_0} \| g\|^{2}_{L^2_{\Lambda^{3}}} dt
	\]
	with some positive constants $ c $, $ c_1 $ and $ c_2 $. Finally,
	\begin{equation}\label{eq47}
		2\int_0^{t_0}\left| \left\langle  \nabla g_1\cdot\Pi^2 u_1 - \nabla g_2\cdot\Pi^2 u_2 + \nabla g_2\cdot\Pi^2 u_1 - \nabla g_2\cdot\Pi^2 u_1, g\right\rangle \right|dt \leq 
	\end{equation}
	\[
	2\int_0^{t_0}\left| \left\langle  \nabla g\cdot\Pi^2 u_1, g\right\rangle \right|dt + 2\int_0^{t_0}\left| \left\langle  \nabla g_2\cdot\Pi^2 (u_1-u_2), g\right\rangle \right|dt
	\]
	The Theorem \ref{t.exist.first} implies that $ u_1=u_2 $. On the other hand, integrating by parts we easily see that
	\[
	2\int_0^{t_0}\left| \left\langle  \nabla g\cdot\Pi^2 u_1, g\right\rangle \right|dt=0,
	\]
	and then \eqref{eq47} equals to zero.
	
	Finally, using by \eqref{eq44} - \eqref{eq47} we get
	\[
	\|g(\cdot,t\|^2_{{L}_{{\Lambda^{3}}}^2}\leq c \int_0^{t_0} \| g\|^{2}_{L^2_{\Lambda^{3}}} dt
	\]
	with some constant $ c>0 $. Then it follows from Gronwall-Perov's Lemma that $ \|g(\cdot, t)\|_{L^2_{\Lambda^{3}}} = 0 $, then the Problem \eqref{eq.weak_g} has unique solution.
	\end{proof}

	\begin{theorem}
		\label{t.exist.g}
		Let %$n \geq 2$, 
		$s \in \mathbb N$ and $k \in {\mathbb Z}_+$ with $k>3/2$. Then for all 
		\[
		(f,u_0)\in B_{\Lambda^{2},\mathrm{for}}^{k+1,2(s-1),s-1}(X_T) \times V_{\Lambda^{2}}^{2s+k+1}
		\]
		there exist a time $ T_k\in (0,T] $ such that the Problem \eqref{eq38} has solution
		\[
		g\in B_{\Lambda^{3},\mathrm{for}}^{k,2s,s}(X_{T_k}).
		\]
Moreover, solution $ g $ is unique, if form $ u $ in \eqref{eq37} satisfied \eqref{eq.NS.lin.weak_u}.
	\end{theorem}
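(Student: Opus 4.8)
The plan is to produce the more regular solution directly from a Faedo--Galerkin scheme: derive higher-order energy estimates uniform in the Galerkin index on a (possibly smaller) time interval $[0,T_k]$, pass to the limit, and invoke the uniqueness of Corollary~\ref{corol} to identify the limit with the weak solution from Theorem~\ref{t.weak.g}. Throughout, $u$ stands for the form associated with $g$ by Theorem~\ref{t.exist.first}; since $\Pi^2 u=u-\varphi^2\nabla g$ by Theorem~\ref{parametrix.laplas}, we have $\nabla g\cdot(\varphi^2\nabla g+\Pi^2 u)=\nabla g\cdot u$, so the right-hand side of \eqref{eq38} is $\mathrm{div}\,f-g^2-\nabla g\cdot u$, and the coupling with $u$ has to be carried along.

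The first ingredient is a nonlinear mapping estimate in the Bochner--Sobolev scale: writing $N(g):=g^2+\nabla g\cdot u$, one checks that if $g\in B^{k,2s,s}_{\Lambda^3,\mathrm{for}}(X_T)$ (with the companion $u=\varphi^2\nabla g+\Pi^2 u$) then $N(g)\in B^{k,2(s-1),s-1}_{\Lambda^3,\mathrm{for}}(X_T)$, with a bound quadratic in the norms, in the spirit of \eqref{eq.B.pos.bound}. Here one uses that $\varphi^2$ gains two spatial derivatives (Theorem~\ref{parametrix.laplas}), so $u$ is at least as regular as $g$; that $\Pi^2 u(\cdot,t)$ lies in the fixed finite-dimensional harmonic space $\mathcal{H}^2\subset C^\infty_{\Lambda^2}$, whence $\|\Pi^2 u(\cdot,t)\|_{H^r_{\Lambda^2}}\le C_r\|u(\cdot,t)\|_{L^2_{\Lambda^2}}$ for all $r$ (its time derivatives being controlled the same way through the $\Pi^2$-projection of the $u$-equation); and the Sobolev multiplication theorem, available because $2s+k>n/2-1$. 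Together with $\mathrm{div}\,f\in B^{k,2(s-1),s-1}_{\Lambda^3,\mathrm{for}}(X_T)$ (first mapping of Theorem~\ref{l.NS.cont.0}) and $\mathrm{div}\,u_0\in H^{2s+k}_{\Lambda^3}$ (the correct trace space for $B^{k,2s,s}_{\Lambda^3,\mathrm{for}}$, since $u_0\in V^{2s+k+1}_{\Lambda^2}$), this places every term of \eqref{eq38} in the right space.

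Then I would run higher-order energy estimates on the Galerkin system \eqref{eq_edin_1}, by induction on $s$ and, for fixed $s$, on $k$: testing with $\nabla^l_3\nabla^m_3\partial_t^j g_m$ for $m+2j\le 2s$ and $0\le l\le k$ (equivalently, applying powers of $\Delta_3$ and time derivatives and pairing in $L^2$), the parabolic term $\mu(\nabla g_m,\nabla(\cdot))_{L^2_{\Lambda^2}}$ yields the dissipation controlling the top spatial norm, the derivatives of $f$ and $\mathrm{div}\,u_0$ contribute the data norms, and the nonlinear terms, expanded by the Leibniz rule, are handled by Gagliardo--Nirenberg and Young exactly as in \eqref{eq_edin_7}--\eqref{eq_edin_12} and in the proof of Corollary~\ref{corol} — the genuinely top-order pieces are absorbed into the dissipation, the rest is polynomial in the lower-order norms. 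This gives $\frac{d}{dt}E_m(t)+\mu D_m(t)\le c\,(1+E_m(t))^{p}$ with $E_m$ the relevant sum of squared norms of $g_m$ (and of $u_m$, $\Pi^2 u_m$) and $c,p$ independent of $m$; the Gronwall--Perov lemma \cite[p.~360]{MPF91}, as in Theorem~\ref{t.weak.g}, then furnishes $T_k\in(0,T]$ and $C_{T_k}$, independent of $m$, with $E_m(t)+\int_0^t D_m\le C_{T_k}$ on $[0,T_k]$. A subsequence converges weakly-$*$ in the $C(I_{T_k},L^2_{\Lambda^3})$-type norms and weakly in the $L^2(I_{T_k},H^\bullet_{\Lambda^3})$-type norms (and likewise for time derivatives), the nonlinear terms passing to the limit via the Aubin--Lions compact embedding of $B^{k,2s,s}_{\Lambda^3,\mathrm{for}}(X_{T_k})$ into, say, $C(I_{T_k},H^{2s+k-1}_{\Lambda^3})$, so the limit $\tilde g\in B^{k,2s,s}_{\Lambda^3,\mathrm{for}}(X_{T_k})$ solves \eqref{eq.weak_g}; since $2s+k\ge 1$ the extra hypothesis of Corollary~\ref{corol} is met, hence $\tilde g=g$ and, with this regularity, $g$ satisfies \eqref{eq38} a.e. The conditional uniqueness is exactly Corollary~\ref{corol}.

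The main obstacle is the high-order nonlinear estimate: the Leibniz expansion of $\nabla g\cdot u$ (equivalently of $\nabla g\cdot\varphi^2\nabla g$ and $\nabla g\cdot\Pi^2 u$) produces many mixed-order terms, and one has to distribute the derivatives so that each factor is controlled either by the dissipative top-order norm, to be absorbed, or by a strictly lower-order norm, to be fed into the Gronwall--Perov polynomial, using crucially the two-order smoothing of $\varphi^2$, the spatial smoothness of $\Pi^2 u$, and the finite-dimensionality of $\mathcal{H}^2$; a related delicate point is to bootstrap $u$ simultaneously (through $u=\varphi^2\nabla g+\Pi^2 u$ and the projected $u$-equation) and to arrange the estimates so that $T_k$ depends only on the data norms, not on the Galerkin index. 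Everything else is the parabolic machinery already exercised in Theorems~\ref{t.weak.g} and~\ref{t.exist.first}.
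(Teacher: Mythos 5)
Your proposal follows essentially the same route as the paper: Faedo--Galerkin approximations, higher-order energy estimates obtained by testing with powers of the Laplacian applied to $g_m$, absorption of the nonlinear terms via Gagliardo--Nirenberg and Young into the dissipation, the Gronwall--Perov lemma to produce a uniform bound on a shortened interval $[0,T_k]$, weak/weak-$*$ compactness to pass to the limit, and Corollary~\ref{corol} for the conditional uniqueness. The only differences are cosmetic (you additionally spell out the time-derivative estimates and invoke Aubin--Lions explicitly), so the argument matches the paper's proof.
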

\begin{proof}
	First of all, denote by
	\begin{equation*}%\label{frac.laplase}
		\Lambda_{r} = 
		\begin{cases}
			\Lambda^{3},\quad &	r\ \text{is even},\\
			\Lambda^{2},\quad &	r\ \text{is odd}.\\
		\end{cases}
	\end{equation*}
	
	As before, let $ g_m $ be the Faedo-Galerkin approximations, see \eqref{faedo}.
	We start with the following priory estimates.
		\begin{lemma}
		\label{l.OM.bound}
		Under hypothesis of the Theorem \ref{t.exist.g}, %with $ s=1 $, 
		if $(f,u_0) \in B^{k+1,0,0}_{{{2}},\mathrm{for}}(X_T) \times V^{k+3}_{\Lambda^{2}}$ with some $ k\in {\mathbb Z}_+ $, then there exist a time $ T_{k} \in (0,T]$ such that 
		\begin{equation}\label{povish}
			\| \nabla^{k'}_2 g_m \|^2_{C (I_{T_{k}},{L}^2_{\Lambda_{k'}}   )}
			+ \mu\, \| \nabla^{k'+1}_2 g_m \|^2_{L^2 (I_{T_{k}},{L}^2_{\Lambda_{k'+1}} )}
			\leq
			C_{k'}
		\end{equation}
		for any $0 \leq k' \leq k+2$, where $ I_{T_{k}} = [0,{T_{k}}] $ and the constants
		$C_{k'} = C_{k'} (\mu,f,u_0) > 0$
		depending on
		$k'$,
		$\mu$
		and the norms
		$\| f \|_{B^{k+1,0,0}_{2,\mathrm{for}} (X_{T_{k}})}$,
		$\| u_0 \|_{V^{k+3}_{\Lambda^{2}}}$
		but not on $m$.
	\end{lemma}
	
	\begin{proof}
	%Let us show, that inequality \eqref{l.OM.bound} is fulfilled for $ k = 0 $. 
	Indeed, if $ k'=0 $ then \eqref{povish} follows immediately from \eqref{eq_edin_11} and Gronwall-Perov's Lemma. Now, substituting $ g_m $ and $ \nabla_{3}^{2r} g_m$ in \eqref{eq38} instead of $ g $ and $ v $ respectively with some $ r\in \mathbb{N} $ and integrating by $ t \in[0,T]$ we get
		\begin{equation}\label{eq23}
			\| \nabla_{3}^r g_m(\cdot, t)\|^2_{{L}_{\Lambda_{r}}^2}
			+ 2\mu\int_0^t \|\nabla_{3}^{r+1} g_m\|^2_{{L}_{{\Lambda^{3}}}^2}dt
			=
		\end{equation}
	\[
\|\nabla_{3}^r g_m(\cdot, 0 )\|^2_{{L}_{\Lambda_{r}}^2} + 	2\int_0^t \langle \mathrm{div}\,f - g_m^2 -\nabla g_m\cdot\varphi^2\,\nabla g_m - \nabla g_m \cdot\Pi^2 u_m, \nabla_{3}^{2r} g_m \rangle dt.
\]
	We have to estimate the right side of  \eqref{eq23}. First,
		\begin{equation}\label{eq_edin_4_1}
			2	\left| \int_0^t \langle \mathrm{div}\,f, \nabla_{3}^{2r} g_m \rangle dt \right| \leq  2\int_0^t 
			\| \nabla_{3}^{r-1}\mathrm{div}\,f\|_{{L}^2_{\Lambda_{r-1}}} 
			\| \nabla_{3}^{r+1} g_m \|_{{L}^2_{\Lambda_{r+1}}} dt \leq
		\end{equation}
		\[
		\frac{4}{\mu}\int_0^t
		\| \nabla_{3}^{r-1}\mathrm{div}\,f\|^2_{{L}^2_{\Lambda_{r-1}}}dt + \frac{\mu}{4} \int_0^t \| \nabla_{3}^{r+1} g_m \|^2_{{L}^2_{\Lambda_{r+1}}} dt.
		\]
		Further,
		\begin{equation}\label{eq_edin_5_1}
			2	\left| \int_0^t \langle g_m^2, \nabla_{3}^{2r} g_m \rangle dt\right| \leq 
			2	\int_0^t \|\nabla_{3}^{r-1}( g_m^2)\|_{L^2_{\Lambda_{r-1}}} \|\nabla_{3}^{r+1} g_m\|_{L^2_{\Lambda_{r+1}}}
			dt.
		\end{equation}
		Let $ r\geq2 $, using by H\"older and Gagliardo-Nirenberg
		inequalities %with $ p_0=4 $, $ j_0 = 0 $, $ r_0 = q_0 = 2 $ and $ m_0 =1 $ 
		we get
		\begin{equation}\label{eq24}
			\|\nabla_{3}^{r-1}( g_m^2)\|_{L^2_{\Lambda_{r-1}}} \leq \sum_{|\alpha|+|\beta| =r-1}c_{\alpha\beta}\|\partial^\alpha g_m\|_{L^4_{\Lambda^3}}\|\partial^\beta g_m\|_{L^4_{\Lambda^3}}\leq
		\end{equation}
		\[
		\sum_{|\alpha|+|\beta| =r-1}c_{\alpha\beta} \left(  \left( \|\nabla_{3}^{|\alpha|+1} g_m\|_{L^2_{\Lambda_{|\alpha|+1}}} + \|\nabla_{3}^{|\alpha|} g_m\|_{L^2_{\Lambda_{|\alpha|}}} \right)^{3/4} \|\nabla_{3}^{|\alpha|} g_m\|_{L^2_{\Lambda_{|\alpha|}}}^{1/4} +\right. 
		\]
		\[
		\left. + \|\nabla_{3}^{|\alpha|} g_m\|_{L^2_{\Lambda_{|\alpha|}}}\right)
		\left(  \left( \|\nabla_{3}^{|\beta|+1} g_m\|_{L^2_{\Lambda_{|\beta|+1}}} + \|\nabla_{3}^{|\beta|} g_m\|_{L^2_{\Lambda_{|\beta|}}} \right)^{3/4} \|\nabla_{3}^{|\beta|} g_m\|_{L^2_{\Lambda_{|\beta|}}}^{1/4} +\right. 
		\]
		\[
		\left. + \|\nabla_{3}^{|\beta|} g_m\|_{L^2_{\Lambda_{|\beta|}}}\right) \leq 
		c\left(  \|g_m\|^{2}_{H^{r-1}_{\Lambda^3}} + \|g_m\|^{5/4}_{H^{r-1}_{\Lambda^3}} \|\nabla_{3}^{r} g_m\|^{3/4}_{L^2_{\Lambda_{r}}}\right)
		\]
		with some positive constants $ c $ and $ c_{\alpha\beta} $. For the exception case $ r=1 $ the last inequality take the form
		\begin{equation}\label{eq26}
			\|\nabla_{3}( g_m^2)\|_{L^2_{\Lambda_{r-1}}} \leq
			c\left(  \|g_m\|^{2}_{L^2_{\Lambda^3}} + \|g_m\|^{1/2}_{L^2_{\Lambda^3}} \|\nabla_{3} g_m\|^{3/2}_{L^2_{\Lambda^2}}\right)
		\end{equation}
	because of in \eqref{eq24} arise a case when $ |\alpha| = |\beta| = r-1 = 0$.
		It follows from \eqref{eq_edin_5_1}, \eqref{eq24} and Young's inequality that
		\begin{equation}\label{eq25}
				2	\left| \int_0^t \langle g_m^2, \nabla_{3}^{2r} g_m \rangle dt\right| \leq \frac{\mu}{4}\int_0^t \|\nabla_{3}^{r+1} g_m\|_{L^2_{\Lambda_{r+1}}}^2 dt + 
		\end{equation}
		\[
		c\|g_m\|^{4}_{C(I,H^{r-1}_{\Lambda^3})} +c\|g_m\|^{5/2}_{C(I,H^{r-1}_{\Lambda^3})}\int_0^t \|\nabla_{3}^{r} g_m\|^{3/2}_{L^2_{\Lambda_{r}}}dt 
		\]
		for $ r\geq 2 $ and
		\begin{equation}\label{eq25_1}
			2	\left| \int_0^t \langle g_m^2, \nabla_{3}^{2} g_m \rangle dt\right| \leq \frac{\mu}{4}\int_0^t \|\nabla_{3}^{2} g_m\|_{L^2_{\Lambda^3}}^2 dt + 
		\end{equation}
			\[
		 c\|g_m\|^{4}_{C(I,L^2_{\Lambda^2})} + c\|g_m\|_{C(I,L^2_{\Lambda^2})}\int_0^t \|\nabla_{3} g_m\|^{3}_{L^2_{\Lambda^2}}dt
		\]
		for $ r =1 $ with some constant $ c>0 $. 
		
		Next,
		\begin{equation}\label{eq27}
				2\left| \int_0^t \langle\nabla g_m\cdot\varphi^2\,\nabla g_m , \nabla_{3}^{2r} g_m \rangle dt\right| \leq 
			\end{equation}
				\[
					2	\int_0^t 
				\|\nabla_{3}^{r-1}\left( \nabla g_m\cdot\varphi^2\,\nabla g_m\right)\|_{L^2_{\Lambda_{r-1}}} \|\nabla_{3}^{r+1} g_m\|_{L^2_{\Lambda_{r+1}}}dt.
				\]
	Analogous by \eqref{eq24} we have
		\begin{equation}\label{eq29}
			\|\nabla_{3}^{r-1}\left( \nabla g_m\cdot\varphi^2\,\nabla g_m\right)\|_{L^2_{\Lambda_{r-1}}} \leq
		\end{equation}
\[
c\left( \|g_m\|_{H^{r-1}_{\Lambda^3}}\|\varphi^2 g_m\|_{H^{r+1}_{\Lambda^3}} + \|g_m\|^{1/4}_{H^{r-1}_{\Lambda^3}}\|\varphi^2 g_m\|_{H^{r+1}_{\Lambda^3}} \|\nabla_{3}^{r} g_m\|^{3/4}_{L^2_{\Lambda_{r}}} + \right. 
\]
\[
\left. \|\nabla_{3}^{r} g_m\|^{1/4}_{L^2_{\Lambda_{r}}}\|\varphi^2 g_m\|_{H^{r+1}_{\Lambda^3}} \|\nabla_{3}^{r+1} g_m\|^{3/4}_{L^2_{\Lambda_{r}}} \right) 
\]
with $ r\in \mathbb{N} $ and some constant $ c>0 $. Theorem \ref{parametrix.laplas} imply that $ \|\varphi^2 g_m\|_{H^{r+1}_{\Lambda^3}}\leq c\| g_m\|_{H^{r-1}_{\Lambda^3}} $ with some positive constant $ c $, then 
\begin{equation}\label{eq30}
		2\left| \int_0^t \langle\nabla g_m\cdot\varphi^2\,\nabla g_m , \nabla_{3}^{2r} g_m \rangle dt\right| \leq \frac{\mu}{4}\int_0^t \|\nabla_{3}^{r+1} g_m\|_{L^2_{\Lambda_{r+1}}}^2 dt + 
\end{equation}
\[
c\|g_m\|^{4}_{C(I,H^{r-1}_{\Lambda^3})} + c\|g_m\|^{5/2}_{C(I,H^{r-1}_{\Lambda^3})}\int_0^t \|\nabla_{3}^{r} g_m\|^{3/2}_{L^2_{\Lambda_{r}}}dt + 
\]
\[
c\|g_m\|^{10}_{C(I,H^{r-1}_{\Lambda^3})}\int_0^t\|\nabla_{3}^{r} g_m\|^{2}_{L^2_{\Lambda_{r}}}dt
\]
with $ c>0 $.

Finally,
\begin{equation}\label{eq28}
	2\left| \int_0^t \langle\nabla g_m \Pi^2 u_m, \nabla_{3}^{2r} g_m \rangle dt\right| \leq
\end{equation}
\[
2	\int_0^t \|\nabla_{3}^{r+1} g_m\|_{L^2_{\Lambda_{r+1}}}
\|\nabla_{3}^{r-1}\left( \nabla g_m\cdot \Pi^2 u_m\right)\|_{L^2_{\Lambda_{r-1}}}dt,
\]
	and we have again
	\begin{equation}\label{eq31}
		\|\nabla_{3}^{r-1}\left( \nabla g_m\cdot \Pi^2 u_m\right)\|_{L^2_{\Lambda_{r-1}}}\leq
	\end{equation}	
		\[
		c\left( \|g_m\|_{H^{r-1}_{\Lambda^3}}\|\Pi^2 u_m\|_{H^{r}_{\Lambda^2}} + \|g_m\|^{1/4}_{H^{r-1}_{\Lambda^3}}\|\Pi^2 u_m\|_{H^{r}_{\Lambda^2}} \|\nabla_{3}^{r} g_m\|^{3/4}_{L^2_{\Lambda_{r}}} + \right. 
		\]
		\[
		\left. \|\nabla_{3}^{r} g_m\|^{1/4}_{L^2_{\Lambda_{r}}}\|\Pi^2 u_m \|_{H^{r}_{\Lambda^3}} \|\nabla_{3}^{r+1} g_m\|^{3/4}_{L^2_{\Lambda_{r}}} \right) 
		\]
	with positive constant $ c $. Operator $ \Pi^2 $ is bounded in $ L^2_{\Lambda^2} $ by the Hodge Theorem \ref{parametrix.laplas}. On the other hand Theorem \ref{t.exist.first} yields that the sequence $ \{u_m\} $ is bounded in $ L^2_{\Lambda^2} $ (see \eqref{eq42}), then $ \|\Pi^2 u_m\|_{H^{r}_{\Lambda^2}}\leq c \| g_m\|_{H^{r-1}_{\Lambda^3}}$ and we get
		
	\begin{equation}\label{eq49}
		2\left| \int_0^t \langle\nabla g_m \Pi^2 u_m, \nabla_{3}^{2r} g_m \rangle dt\right| \leq  
		\frac{\mu}{4}\int_0^t \|\nabla_{3}^{r+1} g_m\|_{L^2_{\Lambda_{r+1}}}^2 dt + 
	\end{equation}
	\[
	c\|g_m\|^{4}_{C(I,H^{r-1}_{\Lambda^3})} + c\|g_m\|^{5/2}_{C(I,H^{r-1}_{\Lambda^3})}\int_0^t \|\nabla_{3}^{r} g_m\|^{3/2}_{L^2_{\Lambda_{r}}}dt + 
	\]
	\[
	c\|g_m\|^{10}_{C(I,H^{r-1}_{\Lambda^3})}\int_0^t\|\nabla_{3}^{r} g_m\|^{2}_{L^2_{\Lambda_{r}}}dt
	\]
	with $ c>0 $.	
		
		It follows from \eqref{eq23} - \eqref{eq49} and Gronwall-Perov's Lemma that if $(f,u_0) \in B^{k+1,0,0}_{{{2}},\mathrm{for}}(X_T) \times V^{k+3}_{\Lambda^{2}}$ and the norm $ \|g_m\|_{C(I,H^{r-1}_{\Lambda^3})} $ is bounded for some $ r \in \mathbb{N}$, $ r\leq k+2 $, then  there exist a time $ t_r\in (0, t_0] $ and a positive constant $ C_r $, depending on the norms $\| f \|_{B^{r+1,0,0}_{2,\mathrm{for}} (X_{T_{k}})}$ and
		$\| u_0 \|_{V^{r+3}_{\Lambda^{2}}}$, such that
		\begin{equation}\label{eq50}
				\| \nabla_{3}^r g_m(\cdot, t)\|^2_{{L}_{\Lambda_{r}}^2}
				+ \mu\int_0^{t_r} \|\nabla_{3}^{r+1}g_m\|^2_{{L}_{{\Lambda^{3}}}^2}dt
				\leq C_r( \mu,f,u_0).
		\end{equation}
	
	Using by \eqref{eq50} consistently for $ r=1,\dots,k+2 $ we get family of times $ t_r $. Denote $ T_k = \min\limits_{r\leq k+2} t_r$,
	then \eqref{eq50} yields that for any $ k\in {\mathbb Z}_+ $ there exist a time $ T_k  $ such that \eqref{l.OM.bound} is fulfilled.
\end{proof}
	
	Theorem \ref{t.weak.g} imply that there exist a solution $g \in C (I,L^2_{\Lambda^{3}}) \cap L^2 (I,H_{\Lambda^{3}}^1)$ of \eqref{eq.weak_g}. On the other hand, it follows from Lemma \ref{l.OM.bound} that for each $	(f,u_0)\in B_{\Lambda^{2},\mathrm{for}}^{k+1,2(s-1),s-1}(X_T) \times V_{\Lambda^{2}}^{2s+k+1}$ there exist a time $ T_{k} \in (0,T]$ and a subsequence $\{ g_{m'} = \mathrm{div}\, u_{m'}\}$ such that $\{ g_{m'}\}$ converges weakly in $L^2 (I_{T_{k}},{L}^{2}_{\Lambda^3} )$ and $^{\ast}$-weakly in $L^\infty (I_{T_{k}},{H}^{k+2}_{\Lambda^3}  ) \cap L^2 (I,{H}^{k+3}_{\Lambda^3}  )$ to an element $g$, then $ g\in B_{\Lambda^{3},\mathrm{for}}^{k,2s,s}(X_{T_k}) $. Moreover, the uniqueness of $g$ immediately follows from Corollary \ref{corol}.

\end{proof}

\begin{theorem}
	\label{t.exist.u}
	Let %$n \geq 2$, 
	$s \in \mathbb N$ and $k \in {\mathbb Z}_+$ with $k\geq 2$. Then for all 
	\[
	(f,u_0)\in B_{\Lambda^{2},\mathrm{for}}^{k+1,2(s-1),s-1}(X_T) \times V_{\Lambda^{2}}^{2s+k+1}
	\]
	there exist a time $ T^*\in (0,T] $ such that the Problem \eqref{eq34} has unique solution
	\[
	(u,p)\in B_{\Lambda^{2},\mathrm{vel}}^{k+1,2s,s}(X_{T_k})\times B_{\Lambda^{2},\mathrm{pre}}^{k+2,2(s-1),s-1}(X_{T_k}).
	\]
\end{theorem}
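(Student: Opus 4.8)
The plan is to read the solution off the three existence results already established: Theorem~\ref{t.exist.g} supplies the divergence $g=\mathrm{div}\,u$, Theorem~\ref{t.exist.first} reconstructs the velocity $u$ from $g$, and Lemma~\ref{p.nabla.Bochner} reconstructs the pressure $p$. Concretely, for the given $(f,u_{0})$ I would first invoke Theorem~\ref{t.exist.g} to obtain a time $T_{k}\in(0,T]$ and a form $g\in B^{k,2s,s}_{\Lambda^{3},\mathrm{for}}(X_{T_{k}})$ solving \eqref{eq38}, and then Theorem~\ref{t.exist.first} to obtain the unique $u$ with $\mathrm{div}\,u=g$ satisfying \eqref{eq.NS.lin.weak_u} (the weak formulations \eqref{eq.weak_g}, \eqref{eq.NS.lin.weak_u} involve only the Leray component $\mathrm{P}^{2}f$ of $f$, so these theorems apply to the data at hand). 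Since $u$ takes values in the $V$-spaces we have $\mathrm{rot}\,u=d_{1}^{*}u=0$, which is the second line of \eqref{eq34}; the third line $\mathrm{div}\,p=0$ will be built into the definition of the pressure space, and the initial condition $u(\cdot,0)=u_{0}$ will follow from $\mathrm{div}\,u(\cdot,0)=\mathrm{div}\,u_{0}$, $\mathrm{rot}\,u(\cdot,0)=0$ and $\Pi^{2}u(\cdot,0)=\Pi^{2}u_{0}$ (the harmonic part being pinned down by testing \eqref{eq.NS.lin.weak_u} against $\mathcal{H}^{2}$). Throughout I will take $T^{*}=T_{k}$.

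The main technical step is to upgrade the regularity of $u$ to $B^{k+1,2s,s}_{\Lambda^{2},\mathrm{vel}}(X_{T_{k}})$. Using $d_{1}^{*}u=0$ we have $\Delta_{2}u=d_{2}^{*}d_{2}u=d_{2}^{*}g$, so by Theorem~\ref{parametrix.laplas}
\[
u=\varphi^{2}\Delta_{2}u+\Pi^{2}u=\varphi^{2}d_{2}^{*}g+\Pi^{2}u .
\]
The operator $\varphi^{2}\circ d_{2}^{*}$ is independent of $t$, is smoothing of order one, and (by a Hodge-theoretic computation using $d_{1}^{*}d_{2}^{*}=0$ and $d_{1}^{*}\Pi^{2}=0$) maps into $\ker d_{1}^{*}$; hence it carries $B^{k,2s,s}_{\Lambda^{3},\mathrm{for}}(X_{T_{k}})$ into $B^{k+1,2s,s}_{\Lambda^{2},\mathrm{vel}}(X_{T_{k}})$. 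The harmonic part $\Pi^{2}u$ takes values in the finite-dimensional space $\mathcal{H}^{2}$, is therefore smooth in $x$, and—testing \eqref{eq.NS.lin.weak_u} against $h\in\mathcal{H}^{2}$, for which $\mathrm{div}\,h=0$—solves the linear ordinary differential equation $\tfrac{d}{dt}(\Pi^{2}u,h)=(f-gu,h)$; since $f\in B^{k+1,2(s-1),s-1}_{\Lambda^{2},\mathrm{for}}(X_{T_{k}})$ and $g$ already has the claimed regularity, solving this equation—equivalently, passing to the limit in the Faedo--Galerkin sequence $u_{m}=\varphi^{2}d_{2}^{*}g_{m}+\Pi^{2}u_{m}$ with the uniform bounds of Lemma~\ref{l.OM.bound} and of \eqref{eq42}—yields $u\in B^{k+1,2s,s}_{\Lambda^{2},\mathrm{vel}}(X_{T_{k}})$.

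Next I would recover the pressure. Put $F:=f-\partial_{t}u-\mu\Delta_{2}u-N^{2}(u)$. Since $2N^{2}(u)=\B(u,u)$ and $u\in B^{k+1,2s,s}_{\Lambda^{2},\mathrm{vel}}(X_{T_{k}})$, Theorem~\ref{l.NS.cont.0} applied at the appropriate smoothness, together with \eqref{eq.B.pos.bound}, places $F$ in $B^{k+1,2(s-1),s-1}_{\Lambda^{2},\mathrm{for}}(X_{T_{k}})$. I claim $\mathrm{P}^{2}F=0$. Indeed, for every $v\in V^{k}_{\Lambda^{2}}$ with $k\ge2$, integrating by parts in the first line of \eqref{eq.NS.lin.weak_u} and using $d_{2}^{*}d_{2}u=\Delta_{2}u$ turns it into $\langle\,\partial_{t}u+\mu\Delta_{2}u+(\mathrm{div}\,u)\,u-f,\,v\,\rangle=0$; since $(\mathrm{div}\,u)\,u=N^{2}(u)-\mathrm{rot}\,M_{q,2}(u,u)$ by \eqref{eq35} while $\langle\mathrm{rot}\,M_{q,2}(u,u),v\rangle=\langle M_{q,2}(u,u),d_{1}^{*}v\rangle=0$, we obtain $\langle F,v\rangle=0$ for all such $v$, and, $V^{k}_{\Lambda^{2}}$ being dense in $V^{0}_{\Lambda^{2}}=\ker d_{1}^{*}$, this says exactly $\mathrm{P}^{2}F=0$ (cf.\ Lemma~\ref{proector}). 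Then Lemma~\ref{p.nabla.Bochner} furnishes a unique $p\in B^{k+2,2(s-1),s-1}_{\Lambda^{1},\mathrm{pre}}(X_{T_{k}})$ with \eqref{eq.p.ort.0} and $\mathrm{rot}\,p=d_{1}p=F$, the relations $\mathrm{div}\,p=0$ and $(p,h)_{L^{2}_{\Lambda^{1}}}=0$ for $h\in\mathcal{H}^{1}$ being built into the pressure space. Hence $\partial_{t}u+\mu\Delta_{2}u+N^{2}(u)+\mathrm{rot}\,p=f$, i.e.\ $(u,p)$ solves \eqref{eq34} on $X_{T_{k}}$.

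Finally, for uniqueness, suppose $(u_{1},p_{1})$ and $(u_{2},p_{2})$ both solve \eqref{eq34}. Applying $d_{2}=\mathrm{div}$ to the first line and using $\mathrm{div}\,\mathrm{rot}=0$ together with \eqref{eq35} shows that each $g_{i}=\mathrm{div}\,u_{i}$ solves \eqref{eq.weak_g}; since $g_{i}$ enjoys the regularity provided by Theorem~\ref{t.exist.g}, Corollary~\ref{corol} forces $g_{1}=g_{2}$, whereupon the uniqueness in Theorem~\ref{t.exist.first} gives $u_{1}=u_{2}$. Then $\mathrm{rot}(p_{1}-p_{2})=0$, and together with $\mathrm{div}(p_{1}-p_{2})=0$ and $p_{1}-p_{2}\perp\mathcal{H}^{1}$ the uniqueness statement of Lemma~\ref{p.nabla.Bochner} yields $p_{1}=p_{2}$. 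The one genuinely delicate point in this scheme is the regularity upgrade of $u$ in the second step—propagating the full Bochner--Sobolev regularity of $g$ to $u$ uniformly on $[0,T_{k}]$ and, in particular, controlling the time regularity of the harmonic component $\Pi^{2}u$ through the ordinary differential equation it satisfies; the remaining steps are bookkeeping with Theorem~\ref{l.NS.cont.0}, Lemma~\ref{p.nabla.Bochner}, Theorem~\ref{t.exist.first} and Corollary~\ref{corol}.
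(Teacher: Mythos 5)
Your proposal follows essentially the same route as the paper: apply the Helmholtz projection $\mathrm{P}^2$ (equivalently, pass to the weak formulation \eqref{eq.NS.lin.weak_u}), obtain the velocity from Theorems \ref{t.exist.g} and \ref{t.exist.first} together with Corollary \ref{corol}, and recover the pressure from Lemma \ref{p.nabla.Bochner} applied to $(I-\mathrm{P}^2)(f-N^{2}(u))$. The only difference is that you spell out the regularity transfer via $u=\varphi^{2}d_{2}^{*}g+\Pi^{2}u$ and the verification that $\mathrm{P}^{2}F=0$, steps the paper's own proof treats as immediate.
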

\begin{proof}
Indeed, apply the projection $ P^2 $ (see Lemma \ref{proector} above) to the equation \eqref{eq34} we have
	\begin{equation}\label{eq51}
	\begin{cases}
		\partial_t u + \mu \Delta_{2} u + P^2 N^{2}(u) = P^2 f& \text{in } X \times (0,T),\\
		u(x,0) = u_0& \text{in } X,
	\end{cases}
\end{equation}
then the form $ p $ actually has to satisfy the equation 
\begin{equation}\label{eq52}
	\mathrm{rot}\, p = (I -P^2)( f - N^{2}(u))\quad \text{in } X \times (0,T).
\end{equation}

Multiplying \eqref{eq51} by $v \in V_{\Lambda^{3}}^k$ we get the Problem \eqref{eq.NS.lin.weak_u}, then the existence and regularity of solution $ u $ follows immediately from the Theorems \ref{t.exist.first} and \ref{t.exist.g}.
On the other hand, it follows from Lemma \ref{p.nabla.Bochner} that there exist unique differential form $ p\in B_{\Lambda^{2},\mathrm{pre}}^{k+2,2(s-1),s-1}(X_{T_k}) $, satisfying \eqref{eq52}.

\end{proof}

\bigskip

\textit{The work was supported by the Foundation for the Advancement of Theoretical Physics and Mathematics "BASIS".}

%\makeRusTit   %% DON'T CHANGE!!!
\end{document}